\theoremstyle{definition}
\newtheorem{definition}{Definition}[section]
\theoremstyle{plain}
\newtheorem{lemma}{Lemma}[section]
\newtheorem{theorem}{Theorem}[section]
\newtheorem{corollary}{Corollary}[section]
\theoremstyle{remark}
\newtheorem{proposition}{Proposition}[section]
\newcommand{\abs}[1]{\left\lvert#1\right\rvert}
\newcommand{\norm}[1]{\left\lVert#1\right\rVert}
\DeclareMathOperator{\id}{id}
\title{Geometry of Numerical Complex Time Integration}
\author{Thorsten Orendt\thanks{Zentrum Mathematik der Technischen Universit\"at M\"unchen, Boltzmannstr. 3, 85748 Garching bei M\"unchen, Germany (orendt@ma.tum.de, richter@ma.tum.de, schmid@ma.tum.de). Authors in alphabetical order.} \and J\"urgen Richter-Gebert$^{*}$ \and Michael Schmid$^{*}$}
\begin{document}
\maketitle
\begin{abstract}
We are studying \textsc{Runge-Kutta} methods along complex paths of integration from a geometric point of view. 
Thereby we derive special complex time grids, which applied to the problem of integrating a linear autonomous system of ordinary 
differential equations, can be used to achieve a classical superconvergence effect. The approach is also adapted for arbitrary ODEs. 
Furthermore we draw a connection from our geometric reasoning to the class of composition methods with complex coefficients.
Thereby, our main goal is to introduce a new point of view on these methods.
\end{abstract}
%
\vspace{1cm}
%
\section{Introduction}
Numerical integration of ODEs is in essence a well understood subject and there is a wide range
of methods that are applicable for different flavors of the problem. This paper 
aims at adding some novel twist to the subject: {\it superconvergence in the presence of certain
complex time grids and a new viewpoint on composition methods with complex coefficients.} Usually numerical integration of ODEs is embedded into a context where
the idealized time variable is considered to be a continuously flowing {\it real} entity. 
Numerical integration methods (like for instance variants  of the \textsc{Runge-Kutta} approach) 
approximate this continuous flow by time steps of finite resolution. 
Usually the finer the grid the smaller the global approximation error. 
Adaptive step width algorithms aim to produce a compromise of numerical accuracy and computational effort. 
Near numerically instable situations, adaptive stepwidth algorithms tend to introduce many additional grid points. 
In fact, in a scenario of time flow within the real numbers there is no way to bypass these numerically critical situations. 
For instance, imagine a version of the restricted two-body problem where a 
planet moves around a fixed star under Newtons law of gravity. If the initial velocity of the planet points directly towards the sun, 
the system will necessarily  run into a singular situation. Scenarios nearby this situation will cause numerically 
difficult situations. In a naive adaptive step width approach usually many additional time steps are introduced when the 
planet swings closely around the sun. A setup that allows also {\it complex} time, leaves the possibility to circumvent the singular 
situation by introducing a time flow that makes a ``detour'' through complex values.

Our research was initially motivated by the question of applying complex detours 
to resolve singularities in numerical integration for ODEs. The philosophy behind this 
approach has been successfully applied to other situations. For instance in the field of Dynamic Geometry 
(see for instance \cite{KRG2}) the second author successfully applied complex detours to avoid discontinuous behavior, 
which was known to be a notorious problem in this field \cite{KRG1}. In relation to making real-time numerical 
simulation available to the dynamic geometry program \textsc{Cinderella}, it was natural to study a complex detour approach, as well.

Investigating complex detours for ODEs raises several interesting research questions.
Besides many interesting implementation-related issues, in particular problems arise, which are related to
the analytic character of the input and the output of the solver.

\begin{itemize}
\item{\bf Analytic continuation of the right side:\enspace}
Functions in a complex scenario require the proper treatment of different branches (for instance for $\sqrt{\ldots}$
and $\log(\ldots)$).
To avoid path jumping (as mentioned for dynamic geometry) all function evaluations have to mimic an 
analytic behavior. Thus in the evaluations of intermediate results the ``correct'' branches have to be chosen. 
This requires the implementation of complex tracing strategies for the evaluation of the right side of the ODE if the derivative in the ODE has several branches.
\item{\bf Complex manifold of the solution:\enspace} 
Not only the right side of the ODE may exhibit monodromy behavior of an analytic function. Also the solution itself may
depend on the actual path chosen for the time variable. Even if the right side of the ODE is unbranched, it may happen that 
the solution at a concrete moment of time depends on the integration path, connecting the start time and the end time.
Such effects have for instance been studied in \cite{CGSS05},
where also a connection of chaotic behavior and these monodromy effects is drawn.
\end{itemize}

Perhaps due to these reasons (the additional implementation problems and the path dependence of the solution) 
in the literature there are only very few cases in which the paradigm of {\it real time} is broken \cite{BORNE}. 
Still we are convinced that the complex setup is worth to be studied both from a practical standpoint that asks for good 
approximations with small computational effort, as well as from a structural purely mathematical standpoint. 
In the first part of this article we will focus on a specific situation, which in a surprising way connects these two aspects. 
We will study linear ODEs, which are interesting since they form a perhaps simplest possible scenario,
in which one can apply complex detours. For a linear right side one has just one single branch, so we can neglect the analytic continuation issues. 
Surprisingly, in this case carefully chosen complex time grids can help to reduce the global integration
error by at least one order. Nevertheless, there is also a disadvantage of this approach. Roughly speaking, the effort
of complex computations is six times higher than in the real-valued setup. Since there exist 
integration methods of arbitrary order, it is natural to ask the following question:

\emph{Why should one study numerical integration along complex time grids to increase the order of a one-step method by one, if one could 
use a real-valued one-step method, which has already got the increased order?}

The answer is, that the authors have not been faced with this question at the beginning of their studies concerning techniques to detour singularities,
but for the reader, which is familiar with the field of numerical integration, it should be interesting that the naive and geometric approach of the authors
has led to a new derivation of the class of composition methods with complex coefficients, 
apart from the usual way of solving some suitable set of order conditions over the reals or 
the complex numbers like done in \cite{HLW02,HO08,V08}. In addition to that, the authors also discovered some results concerning the convergence of 
\textsc{Runge-Kutta} methods.

The paper is organized as follows. In Section 2 we present a simplest possible scenario where the superconvergence 
effects studied in this article arise. This example (namely complex detours for $\dot{x} = x$) 
will serve as a motivating paradigm for our further considerations. Section 3 introduces the necessary setup of complex time grids in relation to 
\textsc{Runge-Kutta} methods. Section 4 is the main technical part of this article. We first deal with the problem under which
condition a \textsc{Runge-Kutta} method applied to a complex path yields a real terminal point (Theorem \ref{cor:realValued}). After this we proof 
a main result of this article: One can increase the order of a convergence of a \textsc{Runge-Kutta} method applied to a linear ODE by choosing a 
suitable complex path (Corollary \ref{cor:superconvergence}). We gain a lot of geometrical insight in the structure of the path 
and can derive explicit criteria that have to be satisfied in order to obtain superconvergence. 
These criteria are closely related to the multiplicative structure of roots of unity in the complex plane. 
Section 5 draws the connection of the complex detours to composition methods, which enables us to extend our method (at least in a certain sense) 
also to the case of a non-linear right-hand side. 
The condition equation for superconvergence derived in Section 4 are closely related to the classical criteria for increasing the order
of composition methods. A composition method is obtained by applying a integration method (say a \textsc{Runge-Kutta} integration)
consecutively in a controlled way. The sequence of these applications can again be considered as {\it one} step of
a more complicated integration method. Under certain circumstances (the above mentioned criteria) one can increase the order of 
the original method by at least one. Our geometric approach allows to interpret some of these composition methods entirely on the level 
of an underlying integration grid. Thus these composition methods simply correspond to a suitably chosen complex detour. 
We can even iterate this process and by this obtain (Section 5.3) composition methods of arbitrary high degree (also known as the ``Yoshida trick''). 
Surprisingly, the corresponding paths that encode the composition structure exhibit a fractal structure. 
Finally at the end of Section 5, we illustrate our methods by the more sophisticated problem of computing the \textsc{Arenstorf} orbit.
\section{A motivating example}
In this section we would like to make the reader familiar with the subject of numerical integration along complex paths (and its benefits) in an
informal but hopefully self-explanatory way. In order to concentrate on the fundamental ideas, this and the follwing section is written in
a very easy and self-containing manner.
  
We choose the simplest possible initial value problem
\begin{equation} \label{eq:ivpexp}
	\dot{x}(t) = f\big(t, x(t)\big), \quad x(t_0) = x_0,
\end{equation}
where $f: \mathbb{R}^2 \to \mathbb{R}, ~ (x, t) \mapsto x$, and $(t_0, x_0) := (0, 1)$. The corresponding 
analytic solution curve $\varphi: \mathbb{R} \rightarrow \mathbb{R}, ~ t \mapsto \varphi(t)$, is given by the well-known exponential 
function $\varphi(t) := e^t, ~ \forall t \in \mathbb{R}$.

In order to compute $\varphi(1)$, a standard approach is given by using an explicit {\sc Runge-Kutta} method (eRKM) along an equidistant decomposition
of the interval $[0, 1]$.
%

\medskip

In contrast to the real interval $[0,1]$, we now study the use of complex paths connecting $0$ and $1$.
As (\ref{eq:ivpexp}) is a well-behaved\footnote{For existence and uniqueness of a complex solution curve have a look at \cite{SS73}.} 
initial value problem given by an autonomous linear first order differential equation 
with constant coefficients, $t \in \mathbb{R}$ seems to be an unnecessary restriction. To be more precisely we note the following. 

Our initial value problem (\ref{eq:ivpexp}) could be written equivalently as the integral equation
\begin{equation*}
	x(t) = x_0 + \int_{t_0}^t x(s) ds = x_0 + \int_\gamma x(z) dz,
\end{equation*}
where $\gamma: \mathbb[t_0, t] \to \mathbb{C}, ~ s \mapsto s$, is a $\mathcal{C}^1$-curve. 
As the solution 
 $\varphi$ has to be a holomorphic function\footnote{In the case of problem (\ref{eq:ivpexp}), 
there exists a unique entire function as solution curve for every choice of $(t_0, x_0) \in \mathbb{C}^2$ as initial condition.},
\textsc{Cauchy}'s well-known integral formula states, that $\varphi(t)$ is independent of the detailed choice of $\gamma$. 
Every $\mathcal{C}^1$-curve $\gamma$, with starting point $t_0$ and ending point $t$, yields the same result for $\varphi(t)$.

With this ``analytic'' picture in mind, we have a look at the behavior of the explicit \textsc{Euler} method along the following
complex time grid. We now take
\begin{equation*}
	\tilde{t}_j := \frac{1}{2} \left(e^{i \pi \left(1 - \frac{j}{n}\right)} + 1\right), \quad \forall j \in \{0, \ldots, n\},
\end{equation*}
as a discretization of the upper complex half circle from $0$ to $1$. 
Starting at the initial value $x_0 = 1$, the explicit \textsc{Euler} method generates $n$, not necessary real-valued, approximation values
\begin{equation*}
	\tilde{x}_j \approx \varphi(\tilde{t}_j) = e^{\tilde{t}_j}, \quad \forall j \in \{0, \ldots, n\}.
\end{equation*}
In the following, we compare the explicit \textsc{Euler} method along the mentioned complex time grid and the equidistant composition
of the interval $[0, 1]$, given by $t_j := \frac{j}{n}, ~ \forall j \in \{0, \ldots, n\}$.

Figure \ref{fig:complexInt} illustrates this construction in the real and complex case. Note that Figure \ref{fig:complexInt} represents 
only the image of the corresponding \textsc{Euler} polygons ($w$-plane). Thus the real-valued construction degenerates onto 
the real line.
\begin{figure}[hbt]
	\begin{center}
		\psset{xunit=4cm,yunit=4cm,runit=4cm}
		\begin{pspicture}(0.9,-0.1)(3,1)
			\psline(0.9,0)(3,0)
			\psline(1,-0.1)(1,1)
			\psdot[dotstyle=diamond,dotsize=0.04](1,0)
			\psdot[dotstyle=diamond,dotsize=0.04](1.1,0)
			\psdot[dotstyle=diamond,dotsize=0.04](1.21,0)
			\psdot[dotstyle=diamond,dotsize=0.04](1.331,0)
			\psdot[dotstyle=diamond,dotsize=0.04](1.4641,0)
			\psdot[dotstyle=diamond,dotsize=0.04](1.61051,0)
			\psdot[dotstyle=diamond,dotsize=0.04](1.771561,0)
			\psdot[dotstyle=diamond,dotsize=0.04](1.9487171,0)
			\psdot[dotstyle=diamond,dotsize=0.04](2.14358881,0)
			\psdot[dotstyle=diamond,dotsize=0.04](2.357947691,0)
			\psdot[dotstyle=diamond,dotsize=0.05](2.59374246,0)
			\psdot[dotstyle=square,dotsize=0.04](1,0)
			\psdot[dotstyle=square,dotsize=0.04](1.0244,0.1545)
			\psdot[dotstyle=square,dotsize=0.04](1.075693,0.308276)
			\psdot[dotstyle=square,dotsize=0.04](1.16058,0.461)
			\psdot[dotstyle=square,dotsize=0.04](1.289,0.608)
			\psdot[dotstyle=square,dotsize=0.04](1.4739,0.733)
			\psdot[dotstyle=square,dotsize=0.04](1.719,0.8108)
			\psdot[dotstyle=square,dotsize=0.04](2.016,0.801)
			\psdot[dotstyle=square,dotsize=0.04](2.3287,0.6673)
			\psdot[dotstyle=square,dotsize=0.04](2.5871,0.3901)
			\psdot[dotstyle=square,dotsize=0.05](2.7107,0.0)
			\psdot[dotstyle=x,dotsize=0.04](1,0)
			\psdot[dotstyle=x,dotsize=0.04](1.1052,0)
			\psdot[dotstyle=x,dotsize=0.04](1.2214,0)
			\psdot[dotstyle=x,dotsize=0.04](1.3499,0)
			\psdot[dotstyle=x,dotsize=0.04](1.4981,0)
			\psdot[dotstyle=x,dotsize=0.04](1.6487,0)
			\psdot[dotstyle=x,dotsize=0.04](1.8221,0)
			\psdot[dotstyle=x,dotsize=0.04](2.0138,0)
			\psdot[dotstyle=x,dotsize=0.04](2.2255,0)
			\psdot[dotstyle=x,dotsize=0.04](2.4596,0)
			\psdot[dotstyle=x,dotsize=0.05](2.7183,0)
			\psdot[dotstyle=+,dotsize=0.04](1,0)
			\psdot[dotstyle=+,dotsize=0.04](1.0126,0.1577)
			\psdot[dotstyle=+,dotsize=0.04](1.0530,0.3187)
			\psdot[dotstyle=+,dotsize=0.04](1.1297,0.4836)
			\psdot[dotstyle=+,dotsize=0.04](1.2559,0.6467)
			\psdot[dotstyle=+,dotsize=0.04](1.4469,0.7904)
			\psdot[dotstyle=+,dotsize=0.04](1.7107,0.8809)
			\psdot[dotstyle=+,dotsize=0.04](2.0335,0.8706)
			\psdot[dotstyle=+,dotsize=0.04](2.3648,0.7157)
			\psdot[dotstyle=+,dotsize=0.04](2.6210,0.4082)
			\psdot[dotstyle=+,dotsize=0.05](2.7183,0.0)
		\end{pspicture}
		\caption{	Real and complex approximation for n = 10. 
							Thereby $+$ and $\times$ represent the corresponding exact values of the 
							solution $\varphi$ along the upper complex half circle respectively the real interval from 0 to 1.
							{\tiny$\square$} and $\diamond$ are the corresponding values of the explicit \textsc{Euler} method.}
		 \label{fig:complexInt}
	\end{center}
\end{figure}
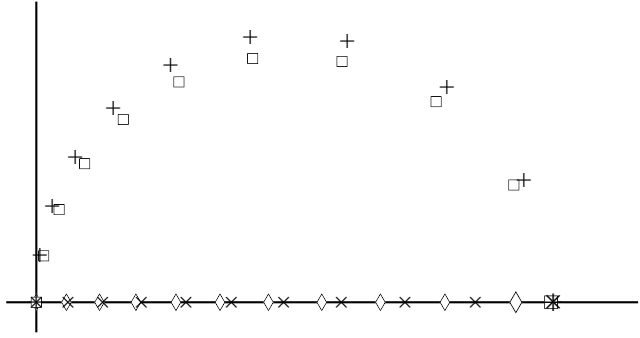
\noindent
Looking at the numerical results,
\begin{align*}
	\renewcommand{\arraystretch}{1.3}
	\begin{array}{c|c|c}
		\text{step} ~ j & x_j ~ (\diamond) & \tilde{x}_j ~ ({\text{\tiny$\square$}}) \\
		\hline
		0 & 1 & 1 \\
		1 & 1.100000000 & 1.024471742 + 0.1545084969 \cdot i \\
		2 & 1.210000000 & 1.075693448 + 0.3082767551 \cdot i \\
		3 & 1.331000000 &	1.160581914 + 0.4613658247 \cdot i \\
		4 & 1.464100000 & 1.289582523 + 0.6080971485 \cdot i \\
		5 & 1.610510000 & 1.473952784 + 0.7336116556 \cdot i \\
		6 & 1.771561000 & 1.719643769 + 0.8108906981 \cdot i \\
		7 & 1.948717100 & 2.016924082 + 0.8017873023 \cdot i \\
		8 & 2.143588810 & 2.328718297 + 0.6673738888 \cdot i \\
		9 & 2.357947691 & 2.587124642 + 0.3901842509 \cdot i \\
		10 & 2.593742460 & 2.710722870 - 0.0000000006 \cdot i
	\end{array}
\end{align*}
we point out the following observations:
\begin{enumerate} 
	\item	Both terminal points seem to be an approximation of $e \approx 2.718281828$ (as expected).
	\item	The imaginary part of the complex terminal point seems to be zero. \label{enum:observations}
	\item	The complex construction yields more correct digits! To be more precise, the complex terminal point $\tilde{x}_{10}$ 
			is more than $16$ times closer to $e$ than the real terminal point $x_{10}$.
\end{enumerate}
Especially the last observation seems a worthwhile phenomenon to go into a deeper study of problem (\ref{eq:ivpexp}) and
related ones. In this article we will develop a theorem\footnote{Compare Theorem \ref{thm:mainTheorem} and Corollary \ref{cor:superconvergence}.}, 
which explains this phenomenon in the more general context of numerical integration of linear initial value problems with
constant coefficients by the use of explicit \textsc{Runge-Kutta} methods (eRKM)s along complex paths.
\section{Complex flows and one-step methods}
Before we confine our attention to the encountered observations above, we have to fix the notation in this section.
\begin{definition}[CIVP] \label{definitionCIVP}
	Let $d \in \mathbb{N}, ~ \Omega_f \subseteq \mathbb{C} \times \mathbb{C}^d$ and 
	$f: \Omega_f \to \mathbb{C}^d, ~ (t, x) \mapsto f(t, x)$, be 
	a continuous function. Then
	\begin{equation}
		\dot{x}(t) = f \big( t, x(t) \big), \label{ode}
	\end{equation}
	is called an \emph{explicit first order differential equation}. A \emph{solution curve} of (\ref{ode}) is a complex differentiable function
	$\varphi : U \to \mathbb{C}^d, ~ t \mapsto \varphi(t)$, with
	\begin{enumerate}
		\item	$\Gamma_\varphi := \left\{ \big( t, \varphi(t) \big) \big| t \in U \right\} \subseteq \Omega_f$ and
		\item	$\dot{\varphi}(t) =  f \big( t, \varphi(t) \big), ~ \forall t \in U$,
	\end{enumerate}
	where $U \subseteq \mathbb{C}$ is an open set. (\ref{ode}) together with a point $(t_0, x_0) \in \Omega_f$, the \emph{initial condition}, 
	is called a \emph{complex initial value problem (CIVP)}. A \emph{solution} to a (CIVP) is a solution curve
	$\varphi: U \rightarrow \mathbb{C}^d$ of (\ref{ode}), where $t_0 \in U$ and
	\begin{equation*}
		\varphi(t_0) = x_0.
	\end{equation*}
\end{definition}
\subsection{Complex flow}
Given a path $\gamma$, with $\gamma(0) = t_0 \in \mathbb{C}$ and $\gamma(1) = t_1 \in \mathbb{C}$. Let us assume that
for every $y \in N$, where $N$ is a neighborhood of $x_0 \in \mathbb{C}^d$, there exists a local solution $\varphi_{(t_0, y_0)}$ to the (CIVP)
\begin{equation*}
	\dot{x}(t) = f\big(t, x(t)\big), \quad x(t_0) = y_0 \in \mathbb{C}^d.
\end{equation*}
For every such solution $\varphi_{(t_0, y_0)}$ let us assume, that the analytic continuation of $\varphi_{(t_0, y_0)}$ along $\gamma$,
denoted by $\tilde{\varphi}_{(t_0, y_0)}$, exists. In this situation, we have a map
\begin{equation*}
	\Phi_f^\gamma: N \rightarrow \mathbb{C}^d, ~ y_0 \mapsto \Phi_f^\gamma y_0 := \tilde{\varphi}_{(t_0, y_0)}(t_1).
\end{equation*}
\paragraph{Note}
As $\Phi_f^\gamma$ maps the initial value $y_0$ at $t_0$ to the corresponding 
``solution value'' $\Phi_f^\gamma y_0$ at $t_1$, we call this map the \emph{complex flow} (or the \emph{evolution induced by f}).
The complex flow can be interpreted as the path-dependent function, mapping an initial value $y_0$ to the value of the
analytic continuation $\tilde{\varphi}_{(t_0, y_0)}$ evaluated at $t_1$.
\subsection{Time grids and discrete flow}
In review of the introductory example, our presented real and complex approach took both use of 
discretizations of a given path $\gamma: [0, 1] \rightarrow U \subseteq \mathbb{C}$. 
Since $\mathbb{C}$ is not equipped with an order relation, we adapt the real-valued
concept of consecutive points in time $t_0 \leq \ldots \leq t_n \in \mathbb{R}$, by the use of indexed sets
\begin{equation*}
	\Delta := \left[ t^\Delta_0, \ldots, t^\Delta_{n_\Delta} \right] \subset \mathbb{C},
\end{equation*}
the \emph{time grids} with $n_\Delta \in \mathbb{N}$ \emph{time steps} $\tau^\Delta_j := t^\Delta_{j + 1} - t^\Delta_j, ~ j \in \{0, \ldots, n_\Delta - 1\}$. 
For every time grid
$\Delta$, we denote
\begin{equation*}
	\tau_\Delta := \max_{j \in \{0, \ldots, n_\Delta - 1\}} \abs{\tau^\Delta_j}
\end{equation*}
as the \emph{maximum step size} of $\Delta$. To increase readability, we will often omit the symbol $\Delta$ (that is, we will identify
$\tau_j = \tau_j^\Delta, ~ n = n_\Delta$, etc.), if mathematical definiteness does not suffer.

In the example of Section 2 our attempt was to construct a corresponding \emph{grid function}
\begin{equation*}
	x_\Delta : \Delta \rightarrow \mathbb{C}^d, ~ t \mapsto x_\Delta(t),
\end{equation*}
with
\begin{equation*}
	x_\Delta(t) \approx \varphi(t), \quad \forall t \in \Delta,
\end{equation*}
where $\varphi: U \rightarrow \mathbb{C}^d$ is a solution of (\ref{eq:ivpexp}) and $\Delta \subseteq U$.
To do so, we have started by setting $x_\Delta(t_0) := x_0$. Next, we have used \textsc{Euler}'s idea of small ``tangential'' update steps to recursively
compute the missing $x_\Delta(t_j)$'s. To generalize this idea, we just have to replace the method
for computing a new value $x_\Delta(t_{j + 1})$ from an already computed point $x_\Delta(t_{j})$. For this purpose, we introduce a function
\begin{equation*}
	\Psi : V \rightarrow \mathbb{C}^d, ~ (t, s, x) \mapsto \Psi^{t, s} x,
\end{equation*}
where $V \subseteq \mathbb{C} \times \mathbb{C} \times \mathbb{C}^d$ is an adequate set.
The variables $s$, $t$ and $x$ play the role of {\it current time}, {\it next time} and {\it current value}, respectively.    
At this point, one naturally assumes that $\Psi^{t, s} x$ is defined for every choice of 
$(t, s, x) \in \mathbb{C} \times \mathbb{C} \times \mathbb{C}^d$, with $(s, x) \in \Omega_f$ and $\abs{t - s}$ being sufficiently small. 
Under these assumptions $\Psi$ is often denoted as the \emph{discrete evolution} in analogy to the complex flow. 
To put things together, we define
\begin{equation*}
	x_\Delta(t_{j + 1}) := \Psi^{t_{j + 1}, t_j} x_\Delta(t_j), \quad \forall j \in \{0, \ldots, n_\Delta - 1\}.
\end{equation*}
For latter purposes, let
\begin{equation*}
	\Psi^\Delta x_0 := x_\Delta(t_{n}),
\end{equation*}
in analogy to $\Phi^\gamma x_0$. This is the terminal point reached when the discrete evolution $\Psi$ is developed along the time grid $\Delta$.
\subsection{Errors}
Given a time grid $\Delta := \left[t_0, \ldots, t_{n_\Delta}\right] \subset \mathbb{C}$, let $x_\Delta$ be a grid function generated by 
a one-step method applied to (\ref{ode}). As we have already seen by the example in the previous section, the approximation process is expected to cause \emph{grid errors} 
$\varepsilon_j := \varepsilon_\Delta(t_j)$, where $j \in \{0, \ldots, n_\Delta\}$ and
\begin{equation*}
	\varepsilon_\Delta : \Delta \rightarrow \mathbb{C}^d, ~ t_j \mapsto \Phi^{t_j, t_0} x_0 - x_\Delta(t_j).
\end{equation*}
At this point, we define for all $0 \leq i \leq j \leq n_\Delta$, $\Phi^{t_j, t_i} := \Phi^{\gamma^{j,i}_\Delta}$, where $\gamma^{j,i}_\Delta$ 
represents the traverse sequentially visiting $t_i, \ldots, t_j$.

The grid errors are generated by the local inaccuracy - the \emph{consistence error} - of the discrete evolution $\Psi$, denoted 
by\footnote{As $\Psi$ is a discrete evolution, $\varepsilon(t, x, \tau)$ is defined for all $(t, x) \in \Omega_f$, if $\tau$ becomes
sufficiently small.}
\begin{equation*}
	\varepsilon(t, x, \tau) := \Phi^{t + \tau, t} x - \Psi^{t + \tau, t} x.
\end{equation*}
Moreover, these local errors interact with the sensitivity of $\Phi^{t_j, t_0}$ to perturbations of $x$. In general they can be damped, 
amplified or fortunately be extinguished throughout the one-step recursion. A special variant of the latter case will be analyzed in what follows.
\section{Theorem linear case}
In this section we will reveal the secrets of the introductory example above in the more general context given by the (CIVP)
\begin{equation} \label{lCIVP}
	\dot{x}(t) = A x, \quad x(t_0) = x_0,
\end{equation}

where $A \in \mathbb{C}^{d \times d}, ~ d \in \mathbb{N}$ and $(t_0, x_0) \in \mathbb{C} \times \mathbb{C}^d$. 
Motivated by several numerical case studies, the effort to construct the corresponding \emph{complex} flow $\Phi^{t, t_0} x_0$ 
for an arbitrary choice of $t \in \mathbb{C}$ up to a desired accuracy by using a \textsc{Runge-Kutta} method (RKM) of order $p \in \mathbb{N}$, 
seems to be heavily dependent on the detailed choice of $\gamma$ - the path of integration. Pay attention to the fact, 
that there is no path-dependence from the analytical point of view, as already mentioned.

For every $n \in \mathbb{N}$, let
\begin{equation*}
	\Delta^\gamma_n := \left[ \gamma(0), \gamma\left(\frac{1}{n}\right), \ldots, \gamma\left(\frac{n - 1}{n}\right), \gamma(1) \right].
\end{equation*}
In the context of (\ref{lCIVP}), the convergence theory of one-step methods assures, that the family of grid functions 
$\left( x_{\Delta^\gamma_n} \right)_{n \in \mathbb{N}}$ corresponding to the used (RKM)
converges towards $\varphi(t) := \Phi^{t, t_0} x_0$ with order $p$ for an arbitrary choice of $\gamma$, where $\gamma(0) = t_0$ and $\gamma(1) = t$.
In the following we will show, that for every (RKM) of order $p$, there exists a path of integration $\gamma^*$, such that
\begin{equation*}
	\varepsilon_n^* := \varepsilon_{\Delta^{\gamma^*}_n} (t) = \mathcal{O}\left(\frac{1}{n^{p + 1}}\right).
\end{equation*}
Roughly speaking, the latter equation states, that there exists a path of integration $\gamma^*$, with $\gamma^*(0) = t_0$ and $\gamma^*(1) = t$, 
such that the error at the ending point is of order $p + 1$ - a superconvergence effect.\footnote{Note that the maximum grid error has not to be of 
order $p + 1$.} Furthermore, $\gamma^*$ can be chosen in such a way, that $\abs{\gamma^*} \leq \frac{\abs{t - t_0} \pi}{2}$, which means, 
that $\gamma^*$ is also of practical interest.
\subsection{Effects of complex conjugation}
Before we start to proof the main theorem of this section, sketched above, we take care of the second observation\footnote{Compare page \pageref{enum:observations}.} mentioned by our introductory example:

For a certain class of complex time grids, one can ensure that the computed value at the ending point of the time grid is real-valued.
Let us therefore have a look at the detailed structure of the discrete evolution $\Psi^{t + \tau, t} x$ of 
an arbitrary $s$-stage (eRKM) 
applied to problem (\ref{lCIVP}). If $\mathfrak{A}_{i,j}, \mathfrak{b}_i \in \mathbb{C}, ~ i, j \in \{1, \ldots, s\}$,
are the coefficients of the (eRKM), it follows that
\begin{equation*}
	\Psi^{t + \tau, t} x = x + \sum_{i = 1}^s \mathfrak{b_i} k_i,
\end{equation*}
where
\begin{equation*}
	k_i := A x + \tau \sum_{j = 1}^{i - 1} \mathfrak{A}_{i,j} A k_j
\end{equation*}
is the \emph{$i$-th stage} of the used (eRKM), $i \in \{1, \ldots, s\}$.
In fact, no matter what the coefficients of the (eRKM) are, if applied to the linear ODE (\ref{lCIVP}), then the evolution of one time step
$\tau \in \mathbb{C}$ from an initial point $x \in \mathbb{C}^d$
can be expressed as a simple matrix multiplication $M x$. Furthermore the matrix $M$ can be expressed
as a (matrix) polynomial in $\tau A$. The precise statement is captured by the following lemma.

\begin{lemma} \label{lemma:psiStructure}
	Let $s \in \mathbb{N}$ and $\Psi^{t + \tau, t} x$ be the discrete evolution of an arbitrary $s$-stage (eRKM) given by the coefficients 
	$\mathfrak{A} \in \mathbb{C}^{s \times s}$ and $\mathfrak{b}, \mathfrak{c} \in \mathbb{C}^s$ applied to problem (\ref{lCIVP}).
	Then it holds that
	\begin{equation*}
		\Psi^{t + \tau, t} x = P(\tau A) x, \quad \forall x \in \mathbb{C}^d,
	\end{equation*}
	where $P$ is a polynomial (applied to the matrix $\tau A$) of degree $s$, with coefficients
	\begin{equation*}
		p_0, \ldots, p_s \in \mathbb{Q}[\mathfrak{A}_{2,1}, \mathfrak{A}_{3,1}, \mathfrak{A}_{3,2}, \ldots, \mathfrak{A}_{s,1}, \ldots, \mathfrak{A}_{s, s - 1}, 
			\mathfrak{b}_1, \ldots, \mathfrak{b}_s].
	\end{equation*}
\end{lemma}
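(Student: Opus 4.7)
The plan is to unfold the recursive definition of the stages $k_i$ and to show, by induction on $i$, that each $k_i$ already has the form (polynomial in $\tau A$) times $Ax$; passing from the stages to $\Psi$ is then immediate, since the output formula merely forms a $\mathfrak{b}$-weighted sum of the $k_i$ and adds $x$.

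More concretely, I would first prove the auxiliary statement: for each $i \in \{1,\ldots,s\}$ there is a polynomial $Q_i \in \mathbb{Q}[\mathfrak{A}_{k,\ell} : k>\ell][X]$ of degree at most $i-1$ such that
\[
k_i \;=\; Q_i(\tau A)\cdot A\,x.
\]
The base case is $Q_1\equiv 1$. For the induction step, substitute the hypothesis for $k_1,\ldots,k_{i-1}$ into the defining equation $k_i = Ax + \tau\sum_{j<i}\mathfrak{A}_{i,j} A\,k_j$, use that $A$ commutes with $\tau A$, and read off the recurrence
\[
Q_i(X) \;=\; 1 \,+\, X\sum_{j=1}^{i-1}\mathfrak{A}_{i,j}\,Q_j(X).
\]
This manifestly preserves rationality of the coefficients and increases the degree by at most one per step, so $\deg Q_i \le i-1$ with coefficients in the stated ring.

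Inserting the stage formula into the update $\Psi^{t+\tau,t}x = x + \tau\sum_i \mathfrak{b}_i k_i$ then yields
\[
\Psi^{t+\tau,t}x \;=\; \Bigl(I \,+\, (\tau A)\sum_{i=1}^{s}\mathfrak{b}_i\, Q_i(\tau A)\Bigr)\,x,
\]
so that $P(X) := 1 + X\sum_{i=1}^{s}\mathfrak{b}_i\,Q_i(X)$ is the required polynomial of degree at most $s$, with coefficients in $\mathbb{Q}[\mathfrak{A}_{2,1},\ldots,\mathfrak{A}_{s,s-1},\mathfrak{b}_1,\ldots,\mathfrak{b}_s]$. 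I do not anticipate any genuine obstacle here: the whole statement is essentially bookkeeping, and the only point warranting explicit attention is confirming that no step of the induction introduces anything beyond integer-linear combinations of products of the tableau entries, so that the resulting coefficient ring is truly $\mathbb{Q}[\,\ldots\,]$ and not the larger $\mathbb{C}[\,\ldots\,]$. A minor caveat is that ``degree $s$'' holds only as an upper bound; equality requires the leading coefficient $\mathfrak{b}_s\mathfrak{A}_{s,s-1}\cdots\mathfrak{A}_{2,1}$ of $P$ to be nonzero.
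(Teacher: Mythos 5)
Your proof follows essentially the same route as the paper's: induction over the stage index to show $k_i = Q_i(\tau A)\,Ax$ with $Q_i$ satisfying exactly the recurrence $Q_i(X)=1+X\sum_{j<i}\mathfrak{A}_{i,j}Q_j(X)$, then substitution into the $\mathfrak{b}$-weighted sum for $\Psi$. Your caveat about ``degree $s$'' being only an upper bound (requiring $\mathfrak{b}_s\mathfrak{A}_{s,s-1}\cdots\mathfrak{A}_{2,1}\neq 0$ for equality) is a small but valid refinement that the paper's statement glosses over.
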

\begin{proof}
	Let us first have a look at the stages $k_i$. By induction over $i \in \{1, \ldots, s\}$, we will show that
	\begin{equation*}
		k_i = P_i(\tau A) A x,
	\end{equation*}
	where $P_i$ is a polynomial of degree $i - 1$ with coefficients in 
	$\mathbb{Q}[\mathfrak{A}_{2,1}, \ldots, \mathfrak{A}_{i,1}, \ldots, \mathfrak{A}_{i, i - 1}]$:
	\noindent
	Considering the first stage of the (eRKM) we get
	\begin{equation*}
		k_1 = A x = P_1(\tau A) A x,
	\end{equation*}
	where $P_1 := 1$. Next, let $i \in \{2, \ldots, s\}$. By using the induction hypothesis, it follows that 
	\begin{align*}
		k_i & = A x + \tau \sum_{j = 1}^{i - 1} \mathfrak{A}_{i,j} A k_j \\
		& = A x + \tau \sum_{j = 1}^{i - 1} \mathfrak{A}_{i,j} A P_j(\tau A) A x \\
		& = \left(I + \sum_{j = 1}^{i - 1} \mathfrak{A}_{i,j} \tau A P_j(\tau A)\right) A x.
	\end{align*}
	As one can easily see, the expression in the bracket above is induced by a polynomial $P_i$ satisfying the desired conditions.
	In conclusion,
	\begin{equation*}
		\Psi^{t + \tau, t} x = x + \tau \sum_{i = 1}^s \mathfrak{b}_i k_i = \left(I + \sum_{i = 1}^s \mathfrak{b}_i P_i(\tau A) \tau A \right) x
 	\end{equation*}
	yields the desired claim. 
	At this point we would like to make the reader aware of the fact, that there is no problem of commutativity in the matrix polynomials above, because of 
	$A^n A^m = A^m A^n$, for all $n, m \in \mathbb{N}_0$. 
\end{proof}
\paragraph{Remark} The polynomial $P$ of the previous lemma is also known as the \emph{stability function} of the underlying (eRKM).
\begin{definition}[symmetric time grid] \label{def:symmetricTimeGrid}
	Let $\Delta := \left[t_0^\Delta, \ldots, t_{n_\Delta}^\Delta\right]$ be a time grid. $\Delta$ is called \emph{symmetric}, if there exists a
	permutation $\pi \in \mathcal{S}_{n_\Delta}$ with $\pi^2 = \id$, such that
	\begin{equation*}
		\tau_j^\Delta = \overline{\tau_{\pi(j + 1) - 1}^\Delta},
	\end{equation*}
	for all $j \in \{0, \ldots, n_\Delta - 1\}$.
\end{definition}
\begin{figure}[hbt]
	\begin{center}
		\psset{xunit=0.8cm,yunit=0.8cm,runit=0.8cm}
		\begin{pspicture}(0,-5)(8,6)
			\put(1.5,5.3){$\mathbb{C}$}
			\psline[](0,1)(8,1)
			\psline[](1,-5)(1,6)
			\dotnode[linecolor=black](1,1){t0}
			\nput{135}{t0}{${}_{t_0^\Delta}$}
			\dotnode[linecolor=black](3,2){t1}
			\nput{0}{t1}{${}_{t_1^\Delta}$}
			\dotnode[linecolor=black](2,3){t2}	
			\nput{180}{t2}{${}_{t_2^\Delta}$}
			\dotnode[linecolor=black](3.5,5){t3}
			\nput{135}{t3}{${}_{t_3^\Delta}$}			
			\dotnode[linecolor=black](4.5,5){t4}
			\nput{45}{t4}{${}_{t_4^\Delta}$}
			\dotnode[linecolor=black](6,3){t5}
			\nput{0}{t5}{${}_{t_5^\Delta}$}
			\dotnode[linecolor=black](5,2){t6}
			\nput{180}{t6}{${}_{t_6^\Delta}$}
			\dotnode[linecolor=black](7,1){t7}
			\nput{45}{t7}{${}_{t_7^\Delta}$}
			\ncline[linecolor=black]{t0}{t1}
			\put(1.6,1.755555){${}_{\tau_0^\Delta}$}
			\ncline[linecolor=black]{t1}{t2}
			\put(2.05,2.33){${}_{\tau_1^\Delta}$}
			\ncline[linecolor=black]{t2}{t3}
			\put(2.8,3.85){${}_{\tau_2^\Delta}$}
			\ncline[linecolor=black]{t3}{t4}
			\put(3.85,4.8){${}_{\tau_3^\Delta}$}
			\ncline[linecolor=black]{t4}{t5}
			\put(4.8,3.85){${}_{\tau_4^\Delta}$}
			\ncline[linecolor=black]{t5}{t6}
			\put(5.5,2.35){${}_{\tau_5^\Delta}$}
			\ncline[linecolor=black]{t6}{t7}
			\put(6.1,1.7){${}_{\tau_6^\Delta}$}
			\put(5.8,4.1){$\gamma$}
			\pscurve[linestyle=dotted,linewidth=1pt]{}(1,1)(3,2)(2,3)(3.5,5)(4.5,5)(6,3)(5,2)(7,1)
			\dotnode(1,1){t0b}
			\nput{225}{t0b}{${}_{\tilde{t}_0^\Delta}$}
			\dotnode[linecolor=black](2,-1){t1b}
			\nput{-90}{t1b}{${}_{\tilde{t}_1^\Delta}$}
			\dotnode[linecolor=black](3.25,-1){t2b}	
			\nput{-90}{t2b}{${}_{\tilde{t}_2^\Delta}$}
			\dotnode[linecolor=black](4.5,-1){t3b}
			\nput{45}{t3b}{${}_{\tilde{t}_3^\Delta}$}			
			\dotnode[linecolor=black](5.25,-4){t4b}
			\nput{-90}{t4b}{${}_{\tilde{t}_4^\Delta}$}
			\dotnode[linecolor=black](6,-1){t5b}
			\nput{0}{t5b}{${}_{\tilde{t}_5^\Delta}$}
			\dotnode[linecolor=black](7,1){t6b}
			\nput{-45}{t6b}{${}_{\tilde{t}_6^\Delta}$}
			\ncline[linecolor=black]{t0b}{t1b}
			\put(1.65,0){${}_{\sigma_0^\Delta}$}
			\ncline[linecolor=black]{t1b}{t2b}
			\put(2.5,-0.65){${}_{\sigma_1^\Delta}$}
			\ncline[linecolor=black]{t2b}{t3b}
			\put(3.75,-0.55){${}_{\sigma_2^\Delta}$}
			\ncline[linecolor=black]{t3b}{t4b}
			\put(4.2,-2.4){${}_{\sigma_3^\Delta}$}
			\ncline[linecolor=black]{t4b}{t5b}
			\put(5.1,-2.4){${}_{\sigma_4^\Delta}$}
			\ncline[linecolor=black]{t5b}{t6b}
			\put(6.65,0){${}_{\sigma_5^\Delta}$}
			\put(5.95,-3.5){$\tilde{\gamma}$}
			\pscurve[linestyle=dashed,linewidth=1pt]{}(1,1)(2,-1)(3.25,-1)(4.5,-1)(5.25,-4)(6,-1)(7,1)
		\end{pspicture}
		\caption{	Two symmetric time grids $\Delta^{\gamma}_7 = \left[t_0^\Delta, \ldots, t_7^\Delta\right]$ 
					and $\Delta^{\tilde{\gamma}}_6 = \left[\tilde{t}_0^\Delta, \ldots, \tilde{t}_6^\Delta\right]$ along 
					$\gamma$ respectively $\tilde{\gamma}$.}
		 \label{fig:symmetricTimeGrid}
	\end{center}
\end{figure}
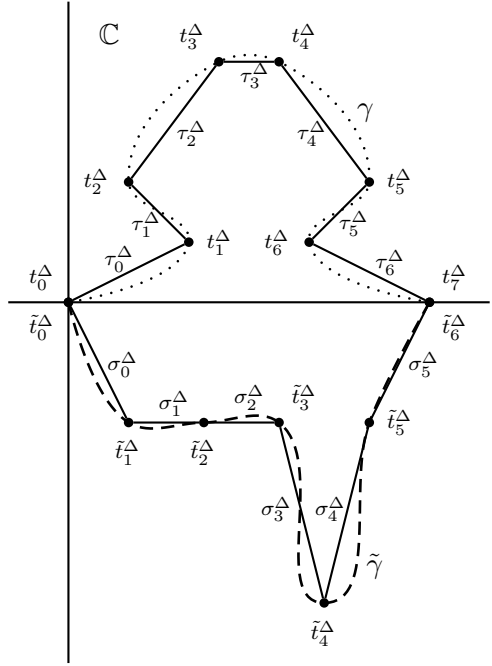
\begin{theorem} \label{cor:realValued}
	Let $\Delta_n^\gamma$ be a symmetric time grid with $t_0^{\Delta_n^\gamma} = t_0 \in \mathbb{R}$ and $t_n^{\Delta_n^\gamma} \in \mathbb{R}$, 
	where $n := n_\Delta$. 
	If $x_n := x_{\Delta_n^\gamma}\left(t_n^{\Delta_n^\gamma}\right)$ is constructed by using an $s$-stage (eRKM) with corresponding coefficients
	$\mathfrak{A} \in \mathbb{R}^{s \times s}$ and $\mathfrak{b}, \mathfrak{c} \in \mathbb{R}^s$ applied to problem (\ref{lCIVP}) 
	along $\Delta_n^\gamma$, where $A \in \mathbb{R}^{d \times d}$, then
	\begin{equation*}
		x_0 \in \mathbb{R}^d \quad \Rightarrow \quad x_n \in \mathbb{R}^d.
	\end{equation*}
\end{theorem}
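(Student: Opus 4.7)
The plan is to combine Lemma \ref{lemma:psiStructure} with the commutativity of polynomials in a common matrix and the involutive pairing provided by symmetry, in order to show that the total propagation matrix is invariant under complex conjugation.

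First I would apply Lemma \ref{lemma:psiStructure} $n$ times to write the terminal value as a product of stability polynomials. Since the (eRKM) is applied to (\ref{lCIVP}), each individual step from $t_j$ to $t_{j+1}$ acts as $x \mapsto P(\tau_j A)x$, where $P \in \mathbb{Q}[\mathfrak{A}_{i,j},\mathfrak{b}_i][X]$ is the stability polynomial. Because $\mathfrak{A},\mathfrak{b}$ are real by hypothesis, $P$ has real coefficients. Therefore
\begin{equation*}
	x_n \;=\; M\,x_0, \qquad M \;:=\; \prod_{j=0}^{n-1} P(\tau_j A),
\end{equation*}
where the order of multiplication is a priori the reverse of the natural index order. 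The crucial observation is that all factors $P(\tau_j A)$ lie in the commutative subalgebra $\mathbb{C}[A]$ of $\mathbb{C}^{d\times d}$, so the product $M$ is independent of the ordering of its factors.

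Next I would take complex conjugation of $M$ entry by entry. Since $A$ has real entries and $P$ has real coefficients, one has the identity $\overline{P(\tau_j A)} = P(\overline{\tau_j}\, A)$, hence
\begin{equation*}
	\overline{M} \;=\; \prod_{j=0}^{n-1} P(\overline{\tau_j}\, A).
\end{equation*}
Now the symmetry hypothesis of Definition \ref{def:symmetricTimeGrid} supplies an involution $\sigma$ on $\{0,\ldots,n-1\}$ (namely $\sigma(j) := \pi(j+1)-1$, which satisfies $\sigma^2 = \id$) with the property $\overline{\tau_j} = \tau_{\sigma(j)}$. Substituting this into the preceding expression and using commutativity of the factors to permute them by $\sigma$ yields
\begin{equation*}
	\overline{M} \;=\; \prod_{j=0}^{n-1} P(\tau_{\sigma(j)} A) \;=\; \prod_{j=0}^{n-1} P(\tau_j A) \;=\; M,
\end{equation*}
so $M$ is a real matrix and $x_n = M x_0 \in \mathbb{R}^d$ follows from $x_0 \in \mathbb{R}^d$.

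I do not expect a genuine obstacle in this argument; the proof is essentially a bookkeeping exercise. The one step that has to be executed with care is the translation of the condition $\tau_j^\Delta = \overline{\tau_{\pi(j+1)-1}^\Delta}$ into the statement that the multiset of step sizes is closed under complex conjugation (with real steps as fixed points of $\sigma$). The whole argument relies on two structural ingredients that must be flagged explicitly, since they are the reason the statement is special to the linear autonomous case: (i) one-step of the (eRKM) collapses to a polynomial in $A$, and (ii) any two such polynomials commute. Without either ingredient, reordering the factors by $\sigma$ would be illegitimate.
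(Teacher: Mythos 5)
Your proof is correct and takes essentially the same route as the paper: reduce one step to a stability polynomial in $\tau_j A$ via Lemma~\ref{lemma:psiStructure}, exploit that all factors commute as elements of $\mathbb{C}[A]$, and use the symmetry permutation together with $\overline{P(\tau A)} = P(\overline{\tau}A)$ to conclude the product is real. The only cosmetic difference is that the paper splits the product explicitly into $\pi$-fixed factors and conjugate pairs $P(\tau_j A)\overline{P(\tau_j A)}$, whereas you reindex the whole product by $\sigma$ and conclude $\overline{M}=M$ in one stroke; both arguments rely on exactly the same two ingredients you flag at the end.
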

\begin{proof}
	Let $\tau_j := \tau_j^{\Delta_n^\gamma}$, for $j \in \{0, \ldots, n - 1\}$. By using Lemma \ref{lemma:psiStructure}, it holds that
	\begin{align*}
		x_n = \prod_{j = 0}^{n - 1} P\left(\tau_j A\right) \cdot x_0,
	\end{align*}
	where $P$ is a polynomial of degree $s$ with real-valued coefficients. Therefore it is sufficient to show, that the product is in $\mathbb{R}^{s \times s}$.
 First we observe, that $P(\overline{\tau} A) = \overline{P(\tau A)}$ and $P(\tau A) P(\sigma A) = P(\sigma A) P(\tau A)$, for all $\sigma, \tau \in \mathbb{C}$.
	Thus,
	\begin{align*}
		\prod_{j = 0}^{n - 1} P\left(\tau_j A\right) & = 
			\prod_{\substack{j \in \{0, \ldots, n - 1\} \\ j + 1 = \pi(j + 1)}} \overbrace{P\left(\tau_j A\right)}^{\in \mathbb{R}^{s \times s}} \cdot
				\prod_{\substack{j \in \{0, \ldots, n - 1\} \\ j + 1 < \pi(j + 1)}} P\left(\tau_j A\right) P\left(\tau_{\pi(j + 1) - 1} A\right) \\
		& = \prod_{\substack{j \in \{0, \ldots, n - 1\} \\ j + 1 = \pi(j + 1)}} \underbrace{P\left(\tau_j A\right)}_{\in \mathbb{R}^{s \times s}} \cdot
			\prod_{\substack{j \in \{0, \ldots, n - 1\} \\ j + 1 < \pi(j + 1)}} 
				\underbrace{P\left(\tau_j A\right) \overline{P\left(\tau_j A\right)}}_{\in \mathbb{R}^{s \times s}},
	\end{align*}
	where $\pi \in \mathcal{S}_n$ is the corresponding permutation to the symmetric time grid $\Delta_n^\gamma$, with $\pi^2 = \id$. 
	Pay attention to the fact, that for every $M \in \mathbb{C}^{s \times s}$, it holds that 
	$M \overline{M} = \overline{M} M \Rightarrow M \overline{M} \in \mathbb{R}^{s \times s}$.
\end{proof}

As a result, Theorem \ref{cor:realValued} justifies the second observation of the introductory example in the more general context of a (CIVP) given by
an explicit linear autonomous system of first order differential equations. There the time grid was chosen to be symmetric along 
a half circle of the complex plane. Theorem  \ref{cor:realValued} states that if we start from a real point the terminal point will be real as well. 

\subsection{Main theorem linear case}
This section is dedicated to the third observation (accuracy gain at the ending point) of the introductory example. 
Let $\Delta$ be a time grid with starting point $t_0 \in \mathbb{C}$ and ending point $t \in \mathbb{C}$.
The main idea is, to ascribe the leading term of the error $\varepsilon_n = \varepsilon_\Delta(t)$ at the ending point to the step sizes $\tau^\Delta_j$.
This is achieved by a direct (but hard-core) expansion of $\Phi^{t, t_0} x_0 - \Psi^\Delta x_0$.
By doing so, we derive a condition for the step sizes $\tau^\Delta_j$, which enables us to increase the order of convergence at the ending point $t$.
\begin{theorem}[main theorem linear case] \label{thm:mainTheorem}
	Let $\gamma$ be a curve, with $\gamma(0) = t_0 \in \mathbb{C}$ and $\gamma(1) = t \in \mathbb{C}$. 
	For every $n \in \mathbb{N}$, we define $\delta_n := \tau_{\Delta^\gamma_n}$ and
	\begin{equation*}
		\tau_j(n) := t^{\Delta^\gamma_n}_{j + 1} - t^{\Delta^\gamma_n}_j, \quad j \in \{0, \ldots, n - 1\}.
	\end{equation*}
	Furthermore let $C \in \mathbb{R}^+$ be a constant, such that $\delta_n \leq C \frac{L(\gamma)}{n}$,
	where $L(\gamma) := \int_0^1 \abs{\dot{\gamma}(t)} dt$ is the \emph{length} of $\gamma$.
	
	If $\varepsilon_n$ denotes the grid error at the ending point $t \in \mathbb{C}$ corresponding to a $p$-stage (eRKM) of order 
	$p \in \mathbb{N}$ applied to problem (\ref{lCIVP}) along $\Delta_n^\gamma$, then 
	\begin{equation} \label{eqn:errorExpansion}
		\lim_{n \to \infty} \frac{\varepsilon_n}{\delta_n^p} = 
			\lim_{n \to \infty} \frac{1}{\delta_n^p} \sum_{j = 0}^{n - 1} \big( \tau_j(n) \big)^{p + 1} \cdot \frac{A^{p + 1} e^{(t - t_0) A}}{(p + 1)!} x_0.
	\end{equation}
\end{theorem}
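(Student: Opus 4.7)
The plan is to reduce the claim to a matrix-polynomial calculation and then extract the leading term by a telescoping identity. By Lemma~\ref{lemma:psiStructure} the one-step discrete evolution equals $P(\tau A)$ for a polynomial $P$ of degree $s=p$. Because the method has order $p$, matching the first $p+1$ Taylor coefficients of $e^{\tau A}$ forces $P(z) = \sum_{k=0}^{p} z^k/k!$ exactly. Hence the local defect admits the explicit expansion
\begin{equation*}
  e^{\tau A} - P(\tau A) = \frac{(\tau A)^{p+1}}{(p+1)!} + \sum_{k \geq p+2} \frac{(\tau A)^k}{k!},
\end{equation*}
whose tail is bounded in norm by $C \, \abs{\tau}^{p+2}$ uniformly for $\abs{\tau} \leq \delta_n$.

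Next, since every matrix involved is a polynomial in $A$ and the $P(\tau_j A)$ mutually commute, the exact flow factors as $e^{(t-t_0)A} = \prod_{j=0}^{n-1} e^{\tau_j(n) A}$, and I would invoke the standard telescoping identity
\begin{equation*}
  \prod_{j=0}^{n-1} e^{\tau_j A} - \prod_{j=0}^{n-1} P(\tau_j A) = \sum_{j=0}^{n-1} M_j \bigl( e^{\tau_j A} - P(\tau_j A) \bigr), \quad M_j := \prod_{k<j} P(\tau_k A) \, \prod_{k>j} e^{\tau_k A}.
\end{equation*}
Substituting the defect expansion above splits $\varepsilon_n$ into a principal sum $\sum_j M_j \, \tau_j^{p+1} A^{p+1} x_0/(p+1)!$ and a remainder $R_n$ gathering the tails. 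Termwise bounds yield $\norm{R_n} \leq C' \sum_j \abs{\tau_j}^{p+2} \leq C'' L(\gamma) \, \delta_n^{p+1} = o(\delta_n^p)$, so $R_n$ drops out on the scale $\delta_n^p$.

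It then remains to show $M_j \to e^{(t-t_0)A}$ uniformly in $j$ as $n \to \infty$. Decomposing
\begin{equation*}
  M_j = e^{((t-t_0) - \tau_j) A} + \Bigl( \prod_{k<j} P(\tau_k A) - \prod_{k<j} e^{\tau_k A} \Bigr) \prod_{k>j} e^{\tau_k A},
\end{equation*}
the first summand is close to $e^{(t-t_0)A}$ since $\abs{\tau_j} \leq \delta_n \to 0$, and the second decays by the usual consistency argument applied to the truncated time grid $\Delta^\gamma_j$. Commutativity then lets me pull the constant factor $A^{p+1} e^{(t-t_0)A} x_0 / (p+1)!$ outside the principal sum, giving precisely the asserted identity~(\ref{eqn:errorExpansion}).

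The main obstacle I expect is the uniform-in-$j$ control of $M_j$: the classical convergence theorem for Runge--Kutta methods yields a bound only for the complete product ($j=n$), whereas here one needs a quantitative estimate on every partial product along an arbitrary complex curve $\gamma$, tight enough to be summed against $\abs{\tau_j}^{p+1}$. This will likely require a direct Gronwall-type estimate carried out in the complex setting rather than a black-box appeal to the real-variable theory.
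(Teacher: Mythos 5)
Your telescoping decomposition $\prod_j e^{\tau_j A} - \prod_j P(\tau_j A) = \sum_j M_j\,\bigl(e^{\tau_j A} - P(\tau_j A)\bigr)$ with $M_j = \prod_{k<j}P(\tau_k A)\prod_{k>j}e^{\tau_k A}$ is a genuine alternative to the paper's route, which instead expands $\prod_j(\psi_j + \omega_j) - \prod_j\psi_j$ directly, keeps the single-$\omega$ terms $\sum_j\omega_j\prod_{l\neq j}\psi_l$, and shoves everything with two or more $\omega$-factors into a remainder $\rho(n)$. Both splits leave you with a sum of local defects $\omega_j = e^{\tau_j A}-P(\tau_j A)$ each weighted by a product of propagators, and both then have to prove that these weights tend uniformly to $e^{(t-t_0)A}$ fast enough to be summed against $|\tau_j|^{p+1}/\delta_n^p$. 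You correctly recognise this uniform-in-$j$ control of $M_j$ as the crux, but you leave it as an announced obstacle rather than carrying it out, so the proposal as written is an outline with its central estimate missing.

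That missing estimate is, however, considerably easier than the Gronwall-type argument you anticipate, precisely because everything here is a polynomial in the single matrix $A$ and therefore commutes. Since $P$ is the truncated exponential with nonnegative coefficients, $\|P(\tau A)\| \le e^{|\tau|\,\|A\|}$, hence $\|M_j\| \le e^{\sum_k|\tau_k|\,\|A\|} \le e^{L(\gamma)\|A\|}$ uniformly in $j$ and $n$; this alone already justifies your remainder bound $\|R_n\| \le C'\sum_j|\tau_j|^{p+2} = O(\delta_n^{p+1})$. For $\sup_j\|M_j - e^{(t-t_0)A}\|\to 0$, apply the same telescoping identity once more to the partial product $\prod_{k<j}P(\tau_k A) - \prod_{k<j}e^{\tau_k A}$; each summand is $O(|\tau_k|^{p+1})$, so the whole partial difference is $O(\delta_n^{p}\,L(\gamma))$ uniformly in $j$, and the first summand $e^{((t-t_0)-\tau_j)A}$ deviates from $e^{(t-t_0)A}$ by $O(\delta_n)$. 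The paper instead replaces the weight $\prod_{l\neq j}\psi_l\, x_0$ by the full discrete product $x_n = \prod_l\psi_l\, x_0$, estimates that swap explicitly with the constant $E = e^{CL(\gamma)\|A\|}$, and finishes by $x_n \to e^{(t-t_0)A}x_0$. The two approaches buy essentially the same thing; the paper's version keeps all constants concrete and avoids introducing the mixed $P$/$\exp$ propagators, at the cost of also having to dispose of the multi-$\omega$ remainder $\rho(n)$ separately.
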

\begin{proof}
	To increase readability we define for all $n \in \mathbb{N}$,
	\begin{equation*}
		\psi_j(n) := \sum_{k = 0}^p \frac{\big( \tau_j(n) A \big)^k}{k!} \quad \text{and} \quad 
			\omega_j(n) := \sum_{k = p + 1}^\infty \frac{\big( \tau_j(n) A \big)^k}{k!},
	\end{equation*}
	where $j \in \{0, \ldots, n - 1\}$. Furthermore let $E := e^{C L(\gamma) \norm{A}}$ and $x_n := \Psi^\Delta x_0$. To begin with our reasoning, we have a look at
	\begin{align*}
		\frac{\varepsilon_n}{\delta_n^p} & = \frac{\Phi^{t, t_0} x_0 - x_n}{\delta_n^p} = \frac{1}{\delta_n^p} \left[ e^{(t - t_0) A} x_0 - x_n \right] \\
		& = \frac{1}{\delta_n^p} \left[ \prod_{j = 0}^{n - 1} e^{\tau_j(n) A} - \prod_{j = 0}^{n - 1} \psi_j(n) \right] x_0 \\
		& = \frac{1}{\delta_n^p} \left[ \prod_{j = 0}^{n - 1} \big( \psi_j(n) + \omega_j(n) \big) - \prod_{j = 0}^{n - 1} \psi_j(n) \right] x_0.
	\end{align*}
	By expanding the first product, we get
	\begin{equation}
		\frac{\varepsilon_n}{\delta_n^p} = 
			\frac{1}{\delta_n^p} \left[ \sum_{j = 0}^{n - 1} \omega_j(n) \prod_{\substack{l = 0 \\ l \neq j}}^{n - 1} \psi_l(n) + \rho(n) \right] x_0, \label{epsilonn}
	\end{equation}
	where
	\begin{align*}
		\frac{\abs{\rho(n)}}{\delta_n^p} & \leq \frac{E}{\delta_n^p} \sum_{r = 2}^n \binom{n}{r} 
			\left( \sum_{k = p + 1}^\infty \frac{\big( C L(\gamma) \norm{A} \big)^k}{k! \cdot n^k} \right)^r \\
		& \leq \frac{E}{\delta_n^p} \sum_{r = 2}^n \underbrace{\frac{n (n - 1) \ldots (n - r + 1)}{r! \cdot n^r}}_{\leq 1}
			\left(n \sum_{k = p + 1}^\infty \frac{\big( C L(\gamma) \norm{A} \big)^k}{k! \cdot n^k} \right)^r \\
		& \leq \frac{E \cdot n^p}{\abs{t - t_0}^p} \sum_{r = 2}^n \left( \frac{E}{n^p} \right)^r 
			\leq \frac{E \cdot n^p}{\abs{t - t_0}^p} \left( \frac{1}{1 - \frac{E}{n^p}} - 1 - \frac{E}{n^p} \right) = 
				\frac{E}{\abs{t - t_0}^p} \cdot \frac{E^2}{n^p - E},
	\end{align*}
	which implies $\lim_{n \to \infty} \frac{\rho(n)}{\delta_n^p} x_0 = 0$.
	In the next steps we show, that we can omit the missing $\psi_j(n)$ (products) and the higher order terms of the $\omega_j(n)$'s in (\ref{epsilonn}) 
	as $n$ tends to infinity. By the use of
	\begin{align*}
		& \frac{1}{\delta_n^p} \Bigg\lVert \sum_{j = 0}^{n - 1} \omega_j(n) \cdot x_n - 
			\sum_{j = 0}^{n - 1} \omega_j(n) \prod_{\substack{l = 0 \\ l \neq j}}^{n - 1} \psi_l(n) \cdot x_0 \Bigg\rVert \\
		\leq & \sum_{j = 0}^{n - 1} \left[ \Bigg\lVert \frac{\omega_j(n)}{\delta_n^p} \Bigg\rVert \Bigg\lVert x_n - 
			\prod_{\substack{l = 0 \\ l \neq j}}^{n - 1} \psi_l(n) \cdot x_0 \Bigg\rVert \right] \leq
				 \sum_{j = 0}^{n - 1} \left[ \Bigg\lVert \frac{\omega_j(n)}{\delta_n^p} \Bigg\rVert \Bigg\lVert \prod_{l = 0}^{n - 1} \psi_l(n) - 
					\prod_{\substack{l = 0 \\ l \neq j}}^{n - 1} \psi_l(n) \Bigg\rVert \right] \norm{x_0} \\
		\leq & \sum_{j = 0}^{n - 1} \left[ \frac{\sum_{k = p + 1}^\infty \frac{(C L(\gamma) \norm{A})^k}{k! \cdot n^{k - p}}}{\abs{t - t_0}^p}
			\Bigg\lVert \sum_{k = 0}^p \frac{\big( \tau_j(n) A \big)^k}{k!} - I \Bigg\rVert 
				\Bigg\lVert \prod_{\substack{l = 0 \\ l \neq j}}^{n - 1} \psi_l(n) \Bigg\rVert \right] \norm{x_0} \\
		\leq & \sum_{j = 0}^{n - 1} \frac{E}{n \abs{t - t_0}^p} \cdot \frac{E}{n} \cdot E \norm{x_0} = \frac{E^3 \norm{x_0}}{\abs{t - t_0}^p} \cdot \frac{1}{n},
	\end{align*}
	it holds that
	\begin{align*}
		\lim_{n \to \infty} \frac{\varepsilon_n}{\delta_n^p} & = \lim_{n \to \infty} \frac{1}{\delta_n^p} \sum_{j = 0}^{n - 1} \omega_j(n) \cdot x_n \\
		& = \lim_{n \to \infty} \left[ \frac{1}{\delta_n^p} \sum_{j = 0}^{n - 1} \frac{\big( \tau_j(n) A \big)^{p + 1}}{(p + 1)!} + 
			\frac{1}{\delta_n^p} \sum_{j = 0}^{n - 1} \sum_{k = p + 2}^\infty \frac{\big( \tau_j(n) A \big)^k}{k!} \right] x_n.
	\end{align*}
	Taking into account that
	\begin{align*}
		\abs{ \frac{1}{\delta_n^p} \sum_{j = 0}^{n - 1} \sum_{k = p + 2}^\infty \frac{\big( \tau_j(n) A \big)^k}{k!}} & \leq
			\frac{1}{\abs{t - t_0}^p} \sum_{j = 0}^{n - 1} \sum_{k = p + 2}^\infty \frac{n^p \big(C L(\gamma) \norm{A} \big)^k}{k! \cdot n^k} \\
		& \leq \frac{1}{\abs{t - t_0}^p \cdot n^2} \sum_{j = 0}^{n - 1} E = \frac{E}{\abs{t - t_0}^p} \cdot \frac{1}{n}
	\end{align*}
	and $\lim_{n \to \infty} x_n = \Phi^{t, t_0} x_0 = e^{(t - t_0) A} x_0$, shows finally the desired claim.
\end{proof}
\paragraph{Note ($s$-stage (eRKM))} What happens if the number of stages of the (eRKM) is larger than its order of convergence?
	In the case of an $s$-stage (eRKM) of order $p < s \in \mathbb{N}$ (notice that $p > s$ is not possible in the explicit case) 
	one can derive a theorem analogous to Theorem \ref{thm:mainTheorem}. We have omitted this case for a better readability 
	of the preceding proof.
	The only difference to the results above is given by a constant $C \in \mathbb{C}^{s \times s}$ 
	(the $p + 1$ coefficient of the \textsc{Runge-Kutta}-polynomial $P(\tau A)$), such that
	\begin{equation*}
		\lim_{n \to \infty} \frac{\varepsilon_n}{\delta_n^p} = 
			\lim_{n \to \infty} \frac{1}{\delta_n^p} \sum_{j = 0}^{n - 1} \big( \tau_j(n) \big)^{p + 1} \cdot 
				 e^{(t - t_0) A} \left[\frac{A^{p + 1}}{(p + 1)!} - C \right] x_0.
	\end{equation*}
	The corresponding proof is achieved by some additional estimations of higher order terms of $\frac{\varepsilon_n}{\delta_n^p}$, 
	analogously to the ones seen above.

\paragraph{Note (implicit (RKM)s)}
	Analogously to the proof above, one can show, that for an $s$-stage implicit (RKM) method, the structure of the error expansion 
	(\ref{eqn:errorExpansion}) is also of the form
	\begin{equation*}
		\lim_{n \to \infty} \frac{1}{\delta_n^p} \sum_{j = 0}^{n - 1} \big( \tau_j(n) \big)^{p + 1} \cdot C(t, t_0, A) x_0,
	\end{equation*}
	where $C(t, t_0, A) \in \mathbb{C}^{s \times s}$.

\begin{proposition} \label{rem:superconvergence}
	Theorem \ref{thm:mainTheorem} yields
	\begin{equation*}
		\sum_{j = 0}^{n - 1} \big( \tau_j(n) \big)^{p + 1} = 0, ~ \forall n \geq N \in \mathbb{N}_0 \quad \Rightarrow \quad
			\lim_{n \to \infty} \frac{\varepsilon_n}{\delta_n^p} = \lim_{n \to \infty} \frac{\Phi^{t, t_0} x_0 - x_n}{\delta_n^p} = 0.
	\end{equation*}	
	In the context of the premises of Theorem \ref{thm:mainTheorem}, this statement provides us a condition, 
	which increases the order of a (RKM) of order $p \in \mathbb{N}$ applied to (\ref{lCIVP}) from $p$ to $p + 1$ at the terminal point.
\end{proposition}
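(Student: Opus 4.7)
The plan is to derive the proposition as an immediate consequence of Theorem \ref{thm:mainTheorem}. First I would recall the conclusion of that theorem,
\begin{equation*}
    \lim_{n \to \infty} \frac{\varepsilon_n}{\delta_n^p} = \lim_{n \to \infty} \frac{1}{\delta_n^p} \sum_{j = 0}^{n - 1} \big( \tau_j(n) \big)^{p + 1} \cdot \frac{A^{p + 1} e^{(t - t_0) A}}{(p + 1)!} x_0,
\end{equation*}
and observe that the hypotheses of the proposition tacitly carry the hypotheses of Theorem \ref{thm:mainTheorem} with them (a $p$-stage (eRKM) of order $p$ applied to (\ref{lCIVP}) along $\Delta_n^\gamma$, with $\delta_n \leq C L(\gamma)/n$), so this expansion is available off the shelf.

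Next I would isolate the path-dependent scalar. Since each $\big(\tau_j(n)\big)^{p+1} \in \mathbb{C}$ is a scalar and the vector $\frac{A^{p+1} e^{(t-t_0)A}}{(p+1)!} x_0 \in \mathbb{C}^d$ is independent of $j$ and $n$, the finite sum distributes past the matrix--vector product, yielding
\begin{equation*}
    \frac{1}{\delta_n^p} \sum_{j = 0}^{n - 1} \big( \tau_j(n) \big)^{p + 1} \cdot \frac{A^{p + 1} e^{(t - t_0) A}}{(p + 1)!} x_0 = \left( \frac{1}{\delta_n^p} \sum_{j = 0}^{n - 1} \big( \tau_j(n) \big)^{p + 1} \right) \cdot \frac{A^{p + 1} e^{(t - t_0) A}}{(p + 1)!} x_0.
\end{equation*}
Under the hypothesis $\sum_{j=0}^{n-1} \big(\tau_j(n)\big)^{p+1} = 0$ for all $n \geq N$, the bracketed scalar factor is identically zero for those $n$, so the right-hand side is the zero vector from index $N$ onwards and the limit is trivially $0$. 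Combining with $\varepsilon_n = \Phi^{t,t_0} x_0 - x_n$ gives the second equality in the conclusion.

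I do not anticipate any genuine obstacle here: the proposition is essentially a textual restatement of what Theorem \ref{thm:mainTheorem} already delivers, once the leading error constant has been split into a scalar part that depends purely on the path (the sum of step sizes to the power $p+1$) and a problem-dependent vector part. The only point worth highlighting is the interpretation: the conclusion $\varepsilon_n = o(\delta_n^p)$ means the terminal truncation error decays strictly faster than the generic $p$-th order bound, which is what justifies the informal phrasing that the condition "raises the order from $p$ to $p+1$" at the end point. A fully rigorous order-$p+1$ statement would require controlling the next term in the expansion of $\varepsilon_n$, but that sharper claim is beyond what the proposition itself asserts.
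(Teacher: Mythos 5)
Your proof is correct and is precisely the argument the paper implicitly intends: the paper states the proposition without a separate proof because it is an immediate consequence of Theorem~\ref{thm:mainTheorem}, obtained exactly as you describe by factoring the constant vector $\frac{A^{p+1}e^{(t-t_0)A}}{(p+1)!}x_0$ out of the sum and noting the scalar prefactor vanishes for $n\geq N$. Your closing caveat is also well taken: the proposition formally gives $\varepsilon_n=o(\delta_n^p)$ rather than $\varepsilon_n=\mathcal{O}(\delta_n^{p+1})$, and the stronger order-$(p+1)$ statement (which the paper announces at the start of Section 4) requires tracking the remainder estimates inside the proof of Theorem~\ref{thm:mainTheorem} rather than just its headline limit.
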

\subsection{Superconvergent paths} \label{subsec:superconvergentPath}
With Remark \ref{rem:superconvergence} in mind, our next goal is obvious: we want to construct complex detours that provide us 
with superconvergence for linear ODEs. We consider an $s$-stage (RKM) of convergence order $p \in \mathbb{N}$ and use it as
a method to compute an approximation of the complex flow $\Phi^{t, t_0} x_0$ corresponding to problem (\ref{lCIVP}), where $t \in \mathbb{C}$.
To achieve superconvergence we are interested in time grid families $\left(\Delta = \left[t_0^\Delta, \ldots, t_{n_\Delta}^\Delta\right]\right)_{n \in \mathbb{N}}$, with
\begin{enumerate} \label{criterions}
	\item	$t_0 = t_0^\Delta$ and $t = t_{n_\Delta}^\Delta$  (in other words $\sum_{j = 0}^{n_\Delta - 1} \tau_j^\Delta = t - t_0$),
	\item	$\sum_{j = 0}^{n_\Delta - 1} \left(\tau_j^\Delta\right)^{p + 1} = 0$ (this provides superconvergence) and
	\item	$\tau_\Delta \leq \frac{D}{n_\Delta}$, where $D \in \mathbb{R}^+$ (prerequisite of Theorem \ref{thm:mainTheorem})
\end{enumerate}
Fortunately the second criterion can be easily interpreted geometrically. This helps us to get a canonical family of time grids
for every $p \in \mathbb{N}$:

We use the fact that all $n$-th roots of unity sum up to zero. Setting  $\zeta_n(j) := e^{2 \pi i \frac{j}{n}}, ~ j \in \mathbb{Z}$, it holds for every $\alpha \in \mathbb{C}$, that
\begin{equation*}
	\sum_{j = 0}^{n - 1} \alpha \zeta_n(j) = 0.
\end{equation*}

Now we choose the time grid $\Delta$ in a way, that $(\tau_j^\Delta)^{p+1}$, $j \in \{0, \ldots, n_\Delta - 1\}$, becomes such a 
collection of roots of unity. As illustrated by Figure \ref{fig:nthRootsOfUnity}, $n$ consecutive $n (p + 1)$-th roots of 
unity $\zeta_{n (p + 1)}(k), \ldots, \zeta_{n (p + 1)}(k + (n - 1))$, are transformed by complex exponentiation $z \mapsto z^{p + 1}$ to the
set of $n$-th roots of unity.
\begin{figure}[hbt]
	\begin{center}
		\psset{xunit=2.8cm,yunit=2.8cm,runit=2.8cm}
		\begin{pspicture}(0,-0.1)(5,2.1)
			\put(2.41,1.27){$z^{p + 1}$}
			\put(2.4,1.1){\Huge{$\curvearrowright$}}
			\put(1.75,1.75){$\mathbb{C}$}
			\psline[](0,1)(2,1)
			\psline[](1,0)(1,2)
			\psline[](1.8,0.97)(1.8,1.03)
			\psline[](0.2,0.97)(0.2,1.03)
			\psline[](0.97,1.8)(1.03,1.8)
			\psline[](0.97,0.2)(1.03,0.2)
			\put(4.75,1.75){$\mathbb{C}$}
			\psline[](3,1)(5,1)
			\psline[](4,0)(4,2)
			\psline[](4.8,0.97)(4.8,1.03)
			\psline[](3.2,0.97)(3.2,1.03)
			\psline[](3.97,1.8)(4.03,1.8)
			\psline[](3.97,0.2)(4.03,0.2)
			\dotnode(1.247213595,1.760845213){0}
			\nput{80}{0}{${}_{\zeta_{15}(3)}$}
			\psline[linewidth=1pt,arrowsize=1pt 2,arrowlength=3,arrowinset=0.1]{->}(1,1)(1.247213595,1.760845213)
			\dotnode(0.9163772293,1.795617516){1}
			\nput{110}{1}{${}_{\zeta_{15}(4)}$}
			\psline[linewidth=1pt,arrowsize=1pt 2,arrowlength=3,arrowinset=0.1]{->}(1,1)(0.9163772293,1.795617516)
			\dotnode(0.6000000000,1.692820323){2}
			\nput{120}{2}{${}_{\zeta_{15}(5)}$}
			\psline[linewidth=1pt,arrowsize=1pt 2,arrowlength=3,arrowinset=0.1]{->}(1,1)(0.6000000000,1.692820323)
			\dotnode(0.3527864045,1.470228202){3}
			\nput{140}{3}{${}_{\zeta_{15}(6)}$}
			\psline[linewidth=1pt,arrowsize=1pt 2,arrowlength=3,arrowinset=0.1]{->}(1,1)(0.3527864045,1.470228202)
			\dotnode(0.2174819193,1.166329352){4}
			\nput{170}{4}{${}_{\zeta_{15}(7)}$}
			\psline[linewidth=1pt,arrowsize=1pt 2,arrowlength=3,arrowinset=0.1]{->}(1,1)(0.2174819193,1.166329352)
			\dotnode(3.352786405,0.5297717970){5}
			\nput{225}{5}{${}_{\zeta_{5}(3)}$}
			\psline[linewidth=1pt,arrowsize=1pt 2,arrowlength=3,arrowinset=0.1]{->}(4,1)(3.352786405,0.5297717970)
			\dotnode(4.247213596,0.2391547870){6}
			\nput{-70}{6}{${}_{\zeta_{5}(4)}$}
			\psline[linewidth=1pt,arrowsize=1pt 2,arrowlength=3,arrowinset=0.1]{->}(4,1)(4.247213596,0.2391547870)
			\dotnode(4.800000001,0.9999999997){7}
			\nput{45}{7}{${}_{\zeta_{5}(0)}$}
			\psline[linewidth=1pt,arrowsize=1pt 2,arrowlength=3,arrowinset=0.1]{->}(4,1)(4.800000001,0.9999999997)
			\dotnode(4.247213595,1.760845213){8}
			\nput{60}{8}{${}_{\zeta_{5}(1)}$}
			\psline[linewidth=1pt,arrowsize=1pt 2,arrowlength=3,arrowinset=0.1]{->}(4,1)(4.247213595,1.760845213)
			\dotnode(3.352786404,1.470228201){9}
			\nput{150}{9}{${}_{\zeta_{5}(2)}$}
			\psline[linewidth=1pt,arrowsize=1pt 2,arrowlength=3,arrowinset=0.1]{->}(4,1)(3.352786404,1.470228201)
		\end{pspicture}
		\caption{Transformation for $n = 5$, $p = 2$ and $k = 3$.}
		 \label{fig:nthRootsOfUnity}
	\end{center}
\end{figure}
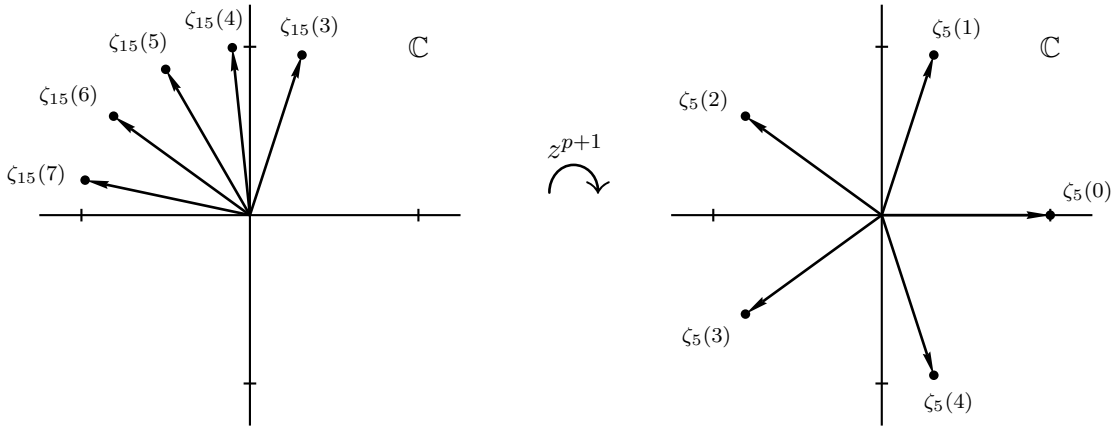
\begin{lemma}
	Let $n, p \in \mathbb{N}, ~ k \in \mathbb{Z}$ and $\alpha \in \mathbb{C}$. Then it holds that
	\begin{equation*}
		\sum_{j = k}^{k + (n - 1)} \big(\alpha \zeta_{n (p + 1)}(j)\big)^{p + 1} = 0
	\end{equation*}
\end{lemma}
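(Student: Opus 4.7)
My plan is to reduce the stated identity to the well-known fact that the full collection of $n$-th roots of unity sums to zero. Everything else is manipulation of exponentials.

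First I would pull out the scalar factor $\alpha^{p+1}$ from the sum, since the exponent $p+1$ is the same for every summand. After this step the claim becomes
\begin{equation*}
\sum_{j=k}^{k+(n-1)} \bigl(\zeta_{n(p+1)}(j)\bigr)^{p+1} = 0.
\end{equation*}

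Next I would use the explicit form $\zeta_{n(p+1)}(j) = e^{2\pi i j/(n(p+1))}$ to simplify the summand: raising to the power $p+1$ cancels the factor $p+1$ in the denominator, giving $\bigl(\zeta_{n(p+1)}(j)\bigr)^{p+1} = e^{2\pi i j/n} = \zeta_n(j)$. So the identity to be shown reduces to
\begin{equation*}
\sum_{j=k}^{k+(n-1)} \zeta_n(j) = 0.
\end{equation*}

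Finally I would observe that $\zeta_n$ is $n$-periodic in its argument (since $e^{2\pi i} = 1$), so as $j$ ranges over any $n$ consecutive integers, $\zeta_n(j)$ ranges (as a multiset) over the complete set of $n$-th roots of unity $\{\zeta_n(0),\ldots,\zeta_n(n-1)\}$. That sum is the value at $1$ of the polynomial $z^n - 1$ divided by $z-1$, namely $0$ whenever $n \ge 2$; for $n=1$ the sum is the single summand $\alpha^{p+1}\cdot 1$, which is indeed handled correctly only if one interprets the statement for $n \ge 2$, or (more cleanly) by noting that $\sum_{j=0}^{n-1} e^{2\pi i j/n}$ is a geometric series with ratio $e^{2\pi i/n}\neq 1$ and numerator $e^{2\pi i}-1 = 0$. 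There is no real obstacle in this argument; the only mild subtlety is the $n=1$ case, which strictly speaking needs $p\ge 1$ for the exponent $p+1 \ge 2$ to force $\zeta_{n(p+1)}(j)^{p+1}$ to traverse a nontrivial full cycle, and this is exactly where the choice $z \mapsto z^{p+1}$ of Figure \ref{fig:nthRootsOfUnity} is essential.
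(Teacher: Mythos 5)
Your argument follows the same route as the paper: factor out $\alpha^{p+1}$, simplify $\bigl(\zeta_{n(p+1)}(j)\bigr)^{p+1} = \zeta_n(j)$, and conclude from the fact that any $n$ consecutive values of $\zeta_n$ give (as a multiset) the complete set of $n$-th roots of unity, whose sum is $0$. Where the paper makes the reindexing explicit by splitting the sum at $j=n$ and shifting indices, you invoke $n$-periodicity of $\zeta_n$ directly, which is slightly cleaner and covers all $k\in\mathbb{Z}$ at once without restriction.

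The only weak spot is your side discussion of $n=1$. You are right that the statement, taken literally, fails for $n=1$ and $\alpha\neq 0$ (the sum is the single term $\alpha^{p+1}$, not $0$), but neither of your proposed repairs actually works: the geometric-series ratio $e^{2\pi i/n}$ equals $1$ precisely when $n=1$, so that formula breaks down in exactly the problematic case, and insisting on $p\ge 1$ does not help because the number of summands is $n$, not $p+1$ --- a one-term sum stays a one-term sum no matter how large the exponent. The honest reading is simply that the lemma presupposes $n\ge 2$, which is automatic in the paper's application since $n$ counts the number of time steps.
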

\begin{proof}
	It holds that
	\begin{align*}
		\sum_{j = k}^{k + (n - 1)} \big(\alpha \zeta_{n (p + 1)}(j)\big)^{p + 1} & = 
			\alpha^{p + 1} \sum_{j = k}^{k + (n - 1)} \left(e^{2 \pi i \frac{j}{n (p + 1)}}\right)^{p + 1} = 
				\alpha^{p + 1}\sum_{j = k}^{k + (n - 1)} e^{2 \pi i \frac{j}{n}} \\
		& = \alpha^{p + 1} \sum_{j = k}^{n - 1} e^{2 \pi i \frac{j}{n}} + \alpha^{p + 1} \sum_{j = n}^{k + (n - 1)} e^{2 \pi i \frac{j}{n}} \\
		&= \alpha^{p + 1} \sum_{j = k}^{n - 1} e^{2 \pi i \frac{j}{n}} + \alpha^{p + 1} \sum_{l = 0}^{k - 1} e^{2 \pi i \frac{l + n}{n}} \\
		& = \alpha^{p + 1} \sum_{j = 0}^{n - 1} e^{2 \pi i \frac{j}{n}} = 0.
	\end{align*}
\end{proof}

In conclusion, one way to build a superconvergent time grid $\Delta = \left[t_0^\Delta, \ldots, t_{n_\Delta}^\Delta\right]$ is to take
time steps $\tau_j^\Delta$ represented by $n$ consecutive $n (p + 1)$-th roots of unity, which have been suitable globally scaled and 
rotated by a factor $\alpha \in \mathbb{C}$, such that $\sum_{j = 0}^{n_\Delta - 1} \tau_j^\Delta = t - t_0$ 
(compare Figure \ref{fig:sumNthRootsOfUnity}). A proper choice for this factor is
\begin{equation*}
	\alpha := \frac{t - t_0}{\sum_{j = k}^{k + (n - 1)} \zeta_{n (p + 1)}(j)}.
\end{equation*}
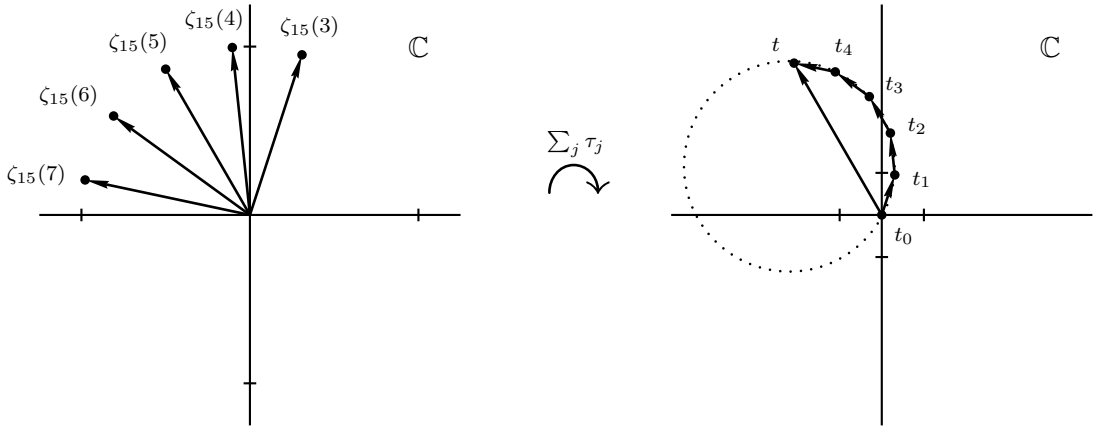
\begin{figure}[hbt]
	\begin{center}
		\psset{xunit=2.8cm,yunit=2.8cm,runit=2.8cm}
		\begin{pspicture}(0,-0.1)(5,2.1)
			\put(2.41,1.35){${}_{\sum_{j} \tau_j}$}
			\put(2.4,1.1){\Huge{$\curvearrowright$}}
			\put(1.75,1.75){$\mathbb{C}$}
			\psline[](0,1)(2,1)
			\psline[](1,0)(1,2)
			\psline[](1.8,0.97)(1.8,1.03)
			\psline[](0.2,0.97)(0.2,1.03)
			\psline[](0.97,1.8)(1.03,1.8)
			\psline[](0.97,0.2)(1.03,0.2)
			\put(4.75,1.75){$\mathbb{C}$}
			\psline[](3,1)(5,1)
			\psline[](4,0)(4,2)
			\psline[](4.2,0.97)(4.2,1.03)
			\psline[](3.8,0.97)(3.8,1.03)
			\psline[](3.97,1.2)(4.03,1.2)
			\psline[](3.97,0.8)(4.03,0.8)
			\dotnode(1.247213595,1.760845213){0}
			\nput{80}{0}{${}_{\zeta_{15}(3)}$}
			\psline[linewidth=1pt,arrowsize=1pt 2,arrowlength=3,arrowinset=0.1]{->}(1,1)(1.247213595,1.760845213)
			\dotnode(0.9163772293,1.795617516){1}
			\nput{110}{1}{${}_{\zeta_{15}(4)}$}
			\psline[linewidth=1pt,arrowsize=1pt 2,arrowlength=3,arrowinset=0.1]{->}(1,1)(0.9163772293,1.795617516)
			\dotnode(0.6000000000,1.692820323){2}
			\nput{120}{2}{${}_{\zeta_{15}(5)}$}
			\psline[linewidth=1pt,arrowsize=1pt 2,arrowlength=3,arrowinset=0.1]{->}(1,1)(0.6000000000,1.692820323)
			\dotnode(0.3527864045,1.470228202){3}
			\nput{140}{3}{${}_{\zeta_{15}(6)}$}
			\psline[linewidth=1pt,arrowsize=1pt 2,arrowlength=3,arrowinset=0.1]{->}(1,1)(0.3527864045,1.470228202)
			\dotnode(0.2174819193,1.166329352){4}
			\nput{170}{4}{${}_{\zeta_{15}(7)}$}
			\psline[linewidth=1pt,arrowsize=1pt 2,arrowlength=3,arrowinset=0.1]{->}(1,1)(0.2174819193,1.166329352)
			\pscircle[linecolor=black,linestyle=dotted,linewidth=1pt](3.561,1.231){0.5}
			\dotnode(4,1){t0}
			\nput{-45}{t0}{${}_{t_0}$}
			\dotnode(4.061803399,1.190211303){5}
			\nput{-10}{5}{${}_{t_1}$}
			\psline[linewidth=1pt,arrowsize=1pt 2,arrowlength=3,arrowinset=0.1]{->}(4,1)(4.061803399,1.190211303)
			\dotnode(4.040897706,1.389115682){6}
			\nput{15}{6}{${}_{t_2}$}
			\psline[linewidth=1pt,arrowsize=1pt 2,arrowlength=3,arrowinset=0.1]{->}(4.061803399,1.190211303)(4.040897706,1.389115682)
			\dotnode(3.940897706,1.562320763){7}
			\nput{25}{7}{${}_{t_3}$}
			\psline[linewidth=1pt,arrowsize=1pt 2,arrowlength=3,arrowinset=0.1]{->}(4.040897706,1.389115682)(3.940897706,1.562320763)
			\dotnode(3.779094307,1.679877813){8}
			\nput{70}{8}{${}_{t_4}$}
			\psline[linewidth=1pt,arrowsize=1pt 2,arrowlength=3,arrowinset=0.1]{->}(3.940897706,1.562320763)(3.779094307,1.679877813)
			\dotnode(3.583464787,1.721460151){9}
			\nput{135}{9}{${}_{t}$}
			\psline[linewidth=1pt,arrowsize=1pt 2,arrowlength=3,arrowinset=0.1]{->}(3.779094307,1.679877813)(3.583464787,1.721460151)
			\psline[linecolor=black,linewidth=1pt,arrowsize=1pt 2,arrowlength=3,arrowinset=0.1]{->}(4,1)(3.583464787,1.721460151)
		\end{pspicture}
		\caption{Counterparts for $n = 5$, $p = 2$ and $k = 3$ (no global scaling and rotation).}
		 \label{fig:sumNthRootsOfUnity}
	\end{center}
\end{figure}

To achieve a superconvergent family of time grids, it is easier to focus on the time steps $\tau_j^\Delta$ as done above. 
For coding and proof, we now change from the time steps to the time grid elements $t_j^\Delta, ~ j \in \{0, \ldots, n_\Delta\}$. 

As one can see by example in Figure \ref{fig:sumNthRootsOfUnity}, a \emph{superconvergent} choice of the time steps $\tau_j^\Delta$, induces that
the time grid elements $t_j^\Delta$ are located on a circle segment, depending on the order $p \in \mathbb{N}$ of the chosen method,
connecting $t_0, t \in \mathbb{C}$.
Figure \ref{fig:sumNthRootsOfUnity} shows the situation for $p = 2$. There, the corresponding time grid elements lie on a third segment of a circle.
Compare this to the half circle segment of the introductory example, where the explicit \textsc{Euler} method has been used ($p = 1$).

In conclusion, if the order of the method is $p \in \mathbb{N}$, locating the time grid elements $t_j^\Delta$ equidistantly on a $(p + 1)$-th
circle segment, connecting $t_0, t \in \mathbb{C}$, provides the desired superconvergence effect. The following theorem formalizes this idea.
\begin{theorem} \label{thm:superconvergentPath}
	Let $\gamma^* := \gamma_{t_0, t}^p$, where
	\begin{equation} \label{eq:gammaFamily}
		\gamma^p_{t_0, t}: [0,1] \to \mathbb{C}, ~ x \mapsto \frac{t_0 - t}{2 i \sin \left(\frac{\pi}{p + 1} \right)} \left[ e^{i \pi \left( \frac{1 - 2 x}{p + 1} \right)} - 
			\cos \left( \frac{\pi}{p + 1} \right) \right] + \frac{t_0 + t}{2}.
	\end{equation}
	Furthermore, let $n \in \mathbb{N}$ and $x_{\Delta^{\gamma^*}_n}$ be the grid function generated by a $s$-stage (RKM) of order 
	$p \in \mathbb{N}$ applied to (\ref{lCIVP}). Then
	\begin{equation*}
		t_0 = \gamma^*(0) \quad \text{and} \quad t = \gamma^*(1).
	\end{equation*}
	Furthermore,
	\begin{equation*}
		\lim_{n \to \infty} \frac{\Phi^{t, t_0} x_0 - x_{\Delta^{\gamma^*}_n}(t)}{\delta_n^p} = 0.
	\end{equation*}
\end{theorem}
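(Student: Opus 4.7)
My plan is to reduce everything to the three criteria listed just before the theorem (endpoints, vanishing of $\sum\tau_j^{p+1}$, uniform step bound) and then invoke Theorem \ref{thm:mainTheorem} together with Proposition \ref{rem:superconvergence}. The first step is a direct substitution: at $x=0$ one has $e^{i\pi/(p+1)}-\cos(\pi/(p+1)) = i\sin(\pi/(p+1))$, which makes the bracket in \eqref{eq:gammaFamily} collapse so that $\gamma^*(0)=\tfrac{t_0-t}{2}+\tfrac{t_0+t}{2}=t_0$; the computation at $x=1$ is symmetric and gives $\gamma^*(1)=t$. This also confirms the geometric picture that $\gamma^*$ is an affine image of the arc $x\mapsto e^{i\pi(1-2x)/(p+1)}$ of the unit circle, i.e.\ a circular arc of opening angle $\tfrac{2\pi}{p+1}$ joining $t_0$ and $t$.

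The crucial second step is to compute the step vectors $\tau_j(n) = \gamma^*((j+1)/n)-\gamma^*(j/n)$ and exhibit them as scaled roots of unity. Factoring out $e^{i\pi(1-2j/n)/(p+1)}$ from the difference of exponentials and using the identity $e^{i\theta}-1 = 2i\sin(\theta/2)\,e^{i\theta/2}$ yields
\begin{equation*}
\tau_j(n) \;=\; C_n \cdot e^{-2\pi i j/(n(p+1))} \;=\; C_n\cdot \zeta_{n(p+1)}(-j),
\end{equation*}
where $C_n$ collects all factors that do not depend on $j$ (explicitly, $C_n = \tfrac{(t-t_0)\sin(\pi/(n(p+1)))}{\sin(\pi/(p+1))} e^{i\pi/(p+1)-i\pi/(n(p+1))}$). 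Raising to the $(p+1)$-th power and summing,
\begin{equation*}
\sum_{j=0}^{n-1}\tau_j(n)^{p+1} \;=\; C_n^{p+1}\sum_{j=0}^{n-1} e^{-2\pi i j/n} \;=\; 0,
\end{equation*}
so criterion~2 on page~\pageref{criterions} is satisfied for every $n$.

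For criterion~3 I note that $|\tau_j(n)|=|C_n|$ is independent of $j$, so $\delta_n=|C_n|$; since $\sin(x)\le x$ on $[0,\infty)$ one gets $|C_n|\le \tfrac{|t-t_0|\pi}{(p+1)\sin(\pi/(p+1))\,n} = L(\gamma^*)/n$, because the total length of the circular arc is exactly $L(\gamma^*)=\tfrac{|t-t_0|\pi}{(p+1)\sin(\pi/(p+1))}$ (radius times subtended angle). This establishes the hypothesis $\delta_n\le C\,L(\gamma^*)/n$ of Theorem \ref{thm:mainTheorem} with $C=1$. Combining the vanishing sum with Proposition \ref{rem:superconvergence} gives $\lim_{n\to\infty}\varepsilon_n/\delta_n^p = 0$, which is the desired superconvergence.

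There is no really hard step here: once one recognises that the arc $\gamma^*$ was engineered precisely so that equidistant parameter values produce chord vectors equal to $n$ consecutive $n(p+1)$-th roots of unity, everything reduces to the elementary identity $\sum_{j=0}^{n-1}\zeta_n(j)=0$. The only delicate point is the bookkeeping in the second step, making sure that the factor pulled out of the exponential difference is genuinely independent of $j$ and that one lands exactly on the $n$-th roots of unity after raising to the $(p+1)$-st power.
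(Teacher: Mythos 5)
Your proof is correct and follows essentially the same route as the paper: both verify the endpoints directly, then factor each step $\tau_j(n)$ as a $j$-independent constant times $e^{-2\pi i j/(n(p+1))}$ so that the $(p+1)$-th powers become $n$-th roots of unity, and invoke Theorem~\ref{thm:mainTheorem} via Proposition~\ref{rem:superconvergence}. You make the constant $C_n$ and the arc-length bound $\delta_n\leq L(\gamma^*)/n$ somewhat more explicit than the paper (which only observes $\delta_n\in\Theta(n^{-1})$), but the underlying argument is identical.
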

\begin{proof}
	By the use of \textsc{Euler}'s formula, a simple calculation shows that $t_0 = \gamma^*(0)$ and $t = \gamma^*(1)$.
	Moreover
	\begin{equation*}
		\tau_j^{\Delta^{\gamma^*}_n} = 
			\frac{t_0 - t}{2 i \sin{\frac{\pi}{p + 1}}} \left( e^{i \pi \frac{1 - 2 \frac{j + 1}{n}}{p + 1}} - e^{i \pi \frac{1 - 2 \frac{j}{n}}{p + 1}} \right),
				\quad \forall j \in \{0, \ldots, n - 1\},
	\end{equation*}
	so it is not hard to see, that
	\begin{equation*}
		\abs{\tau_j^{\Delta^{\gamma^*}_n}} = \text{const}_{t_0, t, p} \cdot \left| e^{-\frac{2 \pi i}{n (p + 1)}} - 1 \right|, \quad \forall j \in \{0, \ldots, n - 1\},
	\end{equation*}
	which implies\footnote{Compare to criterion 3 on page \pageref{criterions}.} $\delta_n \in \Theta(n^{-1})$ and additionally the equidistance of the time steps 
	$\tau_j^{\Delta^{\gamma^*}_n}$. 
	Finally, it is sufficient to show, that
	\begin{equation*}
		\sum_{j = 0}^{n - 1} \left(\tau_j^{\Delta^{\gamma^*}_n}\right)^{p + 1} = 
			\sum_{j = 0}^{n - 1} \left(t_{j + 1}^{\Delta^{\gamma^*}_n} - t_j^{\Delta^{\gamma^*}_n}\right)^{p + 1} = 0.
	\end{equation*}
	It holds that
	\begin{align*}
		\sum_{j = 0}^{n - 1} \left(\tau_j^{\Delta^{\gamma^*}_n}\right)^{p + 1} = 0 \quad \Leftarrow \quad & 
			\sum_{j = 0}^{n - 1} \left( e^{-\frac{2 \pi i}{p + 1} \frac{j + 1}{n}} - e^{-\frac{2 \pi i}{p + 1} \frac{j}{n}}\right)^{p + 1} = 0 \\
				\quad \Leftrightarrow \quad & \sum_{j = 0}^{n - 1} \left( e^{-\frac{2 \pi i}{p + 1} \frac{j }{n}  -\frac{2 \pi i}{p + 1} 
					\frac{1 }{n}} - e^{-\frac{2 \pi i}{p + 1} \frac{j}{n}}\right)^{p + 1} = 0 \\
		\quad \Leftrightarrow \quad & \left( e^{-\frac{2 \pi i}{p + 1} \frac{1}{n}} - 1 \right)^{p + 1} 
			\underbrace{\sum_{j = 0}^{n - 1} e^{-2 \pi i \frac{j }{n}}}_{=:(*)} = 0.
	\end{align*}
	Since $(*)$ is the sum of the $n$-th roots of unity, the desired claim follows by using the expansion given by Theorem \ref{thm:mainTheorem}. 
\end{proof}

The theorem above yields immediately the following 
\begin{corollary}[superconvergence of (RKM)s] \label{cor:superconvergence}
	Given a \textsc{Runge-Kutta} method of order $p \in \mathbb{N}$ applied to (\ref{lCIVP}) along $\gamma^*$.
	The approximation at the terminal point is of order $p + 1$.
\end{corollary}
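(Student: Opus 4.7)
The plan is to upgrade the qualitative statement of Theorem \ref{thm:superconvergentPath} (which only gives $\varepsilon_n = o(\delta_n^p)$, i.e.\ convergence strictly better than order $p$) to the quantitative bound $|\varepsilon_n| = O(\delta_n^{p+1})$, which is what ``order $p+1$'' means. The natural route is to redo the expansion in the proof of Theorem \ref{thm:mainTheorem}, but this time pulling out the $(p+1)$-st \emph{and} the $(p+2)$-nd terms of each $\omega_j(n)$ explicitly, and dividing by $\delta_n^{p+1}$ rather than $\delta_n^p$.

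First I would repeat the telescoping of $\prod_j(\psi_j(n)+\omega_j(n))-\prod_j \psi_j(n)$ from the proof of Theorem \ref{thm:mainTheorem}, splitting $\omega_j(n) = \frac{(\tau_j A)^{p+1}}{(p+1)!} + \frac{(\tau_j A)^{p+2}}{(p+2)!} + \tilde\omega_j(n)$ and estimating the tail $\tilde\omega_j(n)$ along with the higher-order interaction terms in exactly the same geometric-series fashion as before (each remainder picks up an extra factor of $1/n$, which is what lets them be absorbed one order deeper). This gives a refined identity of the schematic form
\[
\varepsilon_n \;=\; \Bigl(\sum_{j=0}^{n-1}\tau_j(n)^{p+1}\Bigr)\frac{A^{p+1}e^{(t-t_0)A}}{(p+1)!}\,x_0 \;+\; \Bigl(\sum_{j=0}^{n-1}\tau_j(n)^{p+2}\Bigr)\frac{A^{p+2}e^{(t-t_0)A}}{(p+2)!}\,x_0 \;+\; R_n,
\]
with $|R_n| = O(\delta_n^{p+2})$. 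On the superconvergent path $\gamma^{*}$ the first bracket vanishes by (the identity established inside) Theorem \ref{thm:superconvergentPath}. For the second bracket, the same proof shows that $\gamma^{*}$ produces \emph{equidistant} steps with $|\tau_j(n)| = \Theta(1/n)$ uniformly in $j$, so
\[
\Bigl|\sum_{j=0}^{n-1}\tau_j(n)^{p+2}\Bigr| \;\leq\; n\cdot \tau_\Delta^{\,p+2} \;=\; O\bigl(n^{-(p+1)}\bigr) \;=\; O(\delta_n^{p+1}).
\]
Combining both contributions with $R_n$ yields $|\varepsilon_n| = O(\delta_n^{p+1})$, which is the claim.

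The main obstacle I expect is bookkeeping rather than mathematics: the remainder estimates in the proof of Theorem \ref{thm:mainTheorem} were tuned to be $o(\delta_n^p)$, and one must verify that each of them genuinely picks up an additional factor of $\delta_n$ when carried one order further. This is the same telescoping and the same geometric-series bound, just with shifted exponents, so no new idea is required; the vanishing of the $(p+1)$-st power sum together with the equidistance of $|\tau_j(n)|$ on $\gamma^{*}$ does all the real work.
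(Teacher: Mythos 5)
Your proposal is correct and in fact more careful than the paper's own derivation. The paper asserts that the corollary ``yields immediately'' from Theorem \ref{thm:superconvergentPath}, but you correctly note that the statement of that theorem literally gives $\varepsilon_n = o(\delta_n^p)$, which is weaker than the $O(\delta_n^{p+1})$ bound that ``order $p+1$'' requires (and that the paper itself announces at the start of Section~4). Your route -- pulling out the $(p+2)$-nd term of each $\omega_j(n)$ and bounding the $(p+2)$-nd power sum via the equidistance $|\tau_j(n)| = \Theta(1/n)$ -- is sound and closes this gap. However, it is slightly heavier than necessary: a closer inspection of the proof of Theorem \ref{thm:mainTheorem} shows that every remainder term there is already bounded at the rate $O(\delta_n^{p+1})$, not merely $o(\delta_n^p)$. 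Concretely, $|\rho(n)|/\delta_n^p \leq E^3/(|t-t_0|^p(n^p-E)) = O(1/n^p)$, the term comparing $\sum_j\omega_j(n)x_n$ with $\sum_j\omega_j(n)\prod_{l\neq j}\psi_l(n)x_0$ is bounded by $E^3\|x_0\|/(|t-t_0|^p n)$, and the tail $\frac{1}{\delta_n^p}\bigl|\sum_j\sum_{k\geq p+2}(\tau_j A)^k/k!\bigr| \leq E/(|t-t_0|^p n)$; each of these, multiplied back by $\delta_n^p$ and using $\delta_n = \Theta(1/n)$, is $O(\delta_n^{p+1})$. Hence once $\sum_j\tau_j(n)^{p+1} = 0$ kills the leading term, one already has $\varepsilon_n = O(\delta_n^{p+1})$ without introducing a separate $(p+2)$-nd expansion term. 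Your method works, but you could spare yourself the extra bookkeeping by simply re-reading the existing remainder bounds with exponents.
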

\paragraph{Note} 
  As complex conjugation do not affect the argumentation so far, the previous corollary holds also
	for $\gamma^* := \overline{\gamma^*}$, where $\overline{\gamma^*}(t) := \overline{\gamma^*(t)}$ for all $t \in [0,1]$.
\section{Nonlinear case}
As we have seen in the last section, \textsc{Runge-Kutta} methods are superconvergent along certain paths of integration,
if the right-hand side of the corresponding (CIVP) is  linear. Having this fact in mind, it is natural to ask whether this feature transfers to the case
of an arbitrary right-hand side $f$. Again our study was triggered by several numerical experiments. These experiments suggested an
adapted version of the above superconvergence statement, that also holds locally in the nonlinear case.
The main idea is to study the order of convergence not for $n_\Delta \to \infty$, as in the linear case, but for $t \to t_0$. Let now 
$n_\Delta = k$ be a fixed number of time steps, $k \in \mathbb{N}$, and let $h := t - t_0$.

We will see, that according to the integration along a time grid $\Delta$ induced by $\gamma_{t_0,t}^p$, it holds that
\begin{equation*}
	\varepsilon_n \in \mathcal{O}\left(\abs{h}^{p + 1}\right),
\end{equation*}
in contrast to the linear case, where $\varepsilon_n \in \mathcal{O}\left(\frac{1}{n_\Delta^{p + 1}}\right)$. Roughly speaking, 
the superconvergence effect is independent of $\abs{t - t_0}$ in the linear case.

Figure \ref{fig:microPathSegment} represents the situation in which a numerical integration along the discretization $\Delta$ is done by 
two $\textsc{Runge-Kutta}$ iterations ($k = 2$). 
If we regard this path as a scaled and rotated copy of a normalized\footnote{This means a $(p + 1)$-th circle segment 
$\gamma_{0,1}^p$ connecting $0$ and $1$, $p \in \mathbb{N}$.} circle segment, we can express the two time steps of $\Delta$ as follows. 
\begin{equation*}
	t_1^\Delta - t_0^\Delta = \sigma_1 h \quad \text{and} \quad t_2^\Delta - t_1^\Delta = \sigma_2 h,
\end{equation*} 
where $\sigma_1$ and $\sigma_2 \in \mathbb{C}$ are the two time steps of the grid $\Delta_2^{\gamma_{0,1}^1}$.\\
\begin{figure}[h]
	\begin{center}
	\psset{xunit=0.9cm,yunit=0.9cm,runit=0.9cm}
	\begin{pspicture}(0,0)(6,4.5)
		\psarc[linestyle=dashed,linewidth=1pt](3,0.5){3}{0}{180}
		\dotnode[dotstyle=diamond*,dotscale=1.5 1.5](0,0.5){tj}
		\nput{-90}{tj}{$t_0^{\Delta}$}
		\dotnode[dotstyle=diamond*,dotscale=1.5 1.5](6,0.5){tjplus}
		\nput{-90}{tjplus}{$t_2^{\Delta}$}
		\dotnode[dotstyle=diamond*,dotscale=2 1.5](3,3.5){tjsigma}
		\nput{90}{tjsigma}{$t_1^{\Delta}$}
		\psline[linewidth=1pt,arrowsize=1pt 2,arrowlength=3,arrowinset=0.1]{->}(0,0.5)(6,0.5)
		\psline[linewidth=1pt,arrowsize=1pt 2,arrowlength=3,arrowinset=0.1]{->}(0,0.5)(3,3.5)
		\psline[linewidth=1pt,arrowsize=1pt 2,arrowlength=3,arrowinset=0.1]{->}(3,3.5)(6,0.5)
		\put(0.88,2.28){$\sigma_1 h$}
		\put(4.35,2.28){$\sigma_2 h$}
		\put(2.85,0.7){$h$}
	\end{pspicture}
	\caption{	Scaled path segment from $t_0^{\Delta}$ to $t_2^{\Delta}$ for $p = 1, ~ k = 2$.}
	\label{fig:microPathSegment}
	\end{center}
\end{figure}
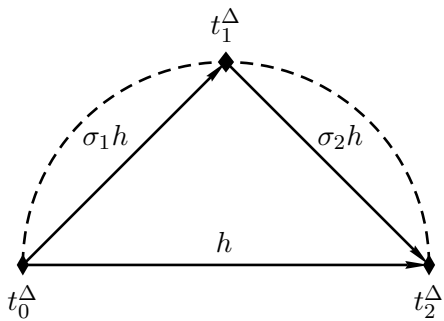

The general case, $k > 2$, behaves analogously. For $l \in \{1, \ldots, k\}$, let $\sigma_l$ be the $l$-th time step of the
normalized grid $\Delta_k^{\gamma_{0,1}^p}$. Due to this fact, the corresponding scaled time steps can be expressed by
$\sigma_l h$, where $h := t - t_0$.

\subsection{Composition methods}
Now, we draw a connection between compositions methods and our idea of numerical integration along complex paths.
Let $\Psi^\tau := \Psi^{0 + \tau, 0}$ be the underlying autonomous\footnote{This is w.l.o.g. no restriction.} discrete evolution of the basic (RKM). 
If we introduce the $k$-term composition method
\begin{equation} \label{epn:Upsilon}
	\Upsilon^h := \Psi^{\sigma_k h} \circ \ldots \circ \Psi^{\sigma_1 h}, 
\end{equation}
the process of constructing $x_k := x_{\Delta}\left(t_k^{\Delta}\right)$ from 
$x_0 := x_{\Delta}\left(t_0^{\Delta}\right)$, where $\Delta := \Delta_k^{\gamma_{0, h}^p}$, is given by
\begin{equation*}
	x_k = \Upsilon^{h} x_0.
\end{equation*}
By the  superconvergence conditions of Section \ref{subsec:superconvergentPath}, our $\sigma_1, \ldots, \sigma_k$ satisfy the two equations
\begin{equation} \label{eqn:orderConditions}
	\sigma_1 + \ldots + \sigma_k = 1 \quad \text{and} \quad \sigma_1^{p + 1} + \ldots + \sigma_k^{p + 1} = 0.
\end{equation}
These conditions are exactly the criteria for a classical theorem on composition methods. 
From \cite[p. 39, Theorem 4.1]{HLW02} it follows, that $\Upsilon^h$ is a one-step method of convergence order at least $p + 1$.

\paragraph{Remark}
	The order conditions (\ref{eqn:orderConditions}) can be found in \cite{HLW02}. 
	There, some specific composition methods have been introduced by explicitly solving the above order conditions over the reals.
	\cite{HO08} and \cite{V08} also stated methods, by solving these equations over the complex numbers. 
	In contrast to them, our starting point has not been (\ref{eqn:orderConditions}). The adaption
	of superconvergent paths $\gamma_{t_0, t}^p$ from the linear theorem, solves implicitly the sufficient order conditions.

\subsection{Iterations}
The method given by (\ref{epn:Upsilon}) is called a $k$-term composition method. If the underlying discrete evolution $\Psi$ has got
order $p \in \mathbb{N}$, it follows that $\Upsilon$ is a one-step method of order at least $p + 1$. 
As concatenation of (RKM)s is a (RKM) again, there is no problem to regard $\Upsilon$ as a new basic (RKM). $\Upsilon$ can again be used in the same way
to construction a composition method -- this time of order at least $p+2$. 
Applying this process iteratively, it is possible to generate methods of arbitrary order of convergence. 

We start with a (RKM) $\Psi$ of order $p$. Iteration, leads to methods $\Upsilon^h_r$ (parameter $k$ omitted in this notation), 
which are recursively defined by
\begin{align*}
	\Upsilon^h_0 & := \Psi^h, \\
	\Upsilon^h_{r + 1} & := \Upsilon_r^{\Delta_r},
\end{align*}
where $\Delta_r := \Delta^{\gamma^{p(r)}_{0, h}}_k$ and $r \geq 0$. $\Upsilon^h_r$ is a method of order $p(r)$. Here
\begin{align*}
	p(0) & := p, \\
	p(r + 1) & := p(r) + g,
\end{align*}
where $g \in \mathbb{N}$ - the gain of order of convergence - depends on $\Psi$. 
For example, if $\Psi$ is the discrete evolution corresponding to a (eRKM), $g = 1$. If $\Psi$ is the discrete evolution of a symmetric method, $g = 2$.

With our geometric picture in mind, we now can reduce the method $\Upsilon_r$ to the basic method $\Psi$, applied along a certain path. 
The previous recursion for $\Upsilon_r$ implies, that 
$\Upsilon^h_r x_0 = \Psi^\Gamma x_0$, where $\Gamma$ is a recursively defined time grid depending on
the parameters $r$ (depth of recursion), $k$ (number of time steps), $p$ (order of $\Psi$) and $h$ (global step size). 
We exemplify $\Gamma$ for several choices of $r$ in Figure \ref{fig:gammaFractal}. As one can see, $\Gamma$ becomes more and more a
fractal-like structure. Furthermore the time steps of $\Gamma$ comply with the coefficients $\sigma_{r,j}$, $j \in \{1, 2\}$ 
in \cite[p. 5, ``two term composition'']{HO08} ($k > 2$ analogously). 
\begin{sidewaysfigure}[!h]
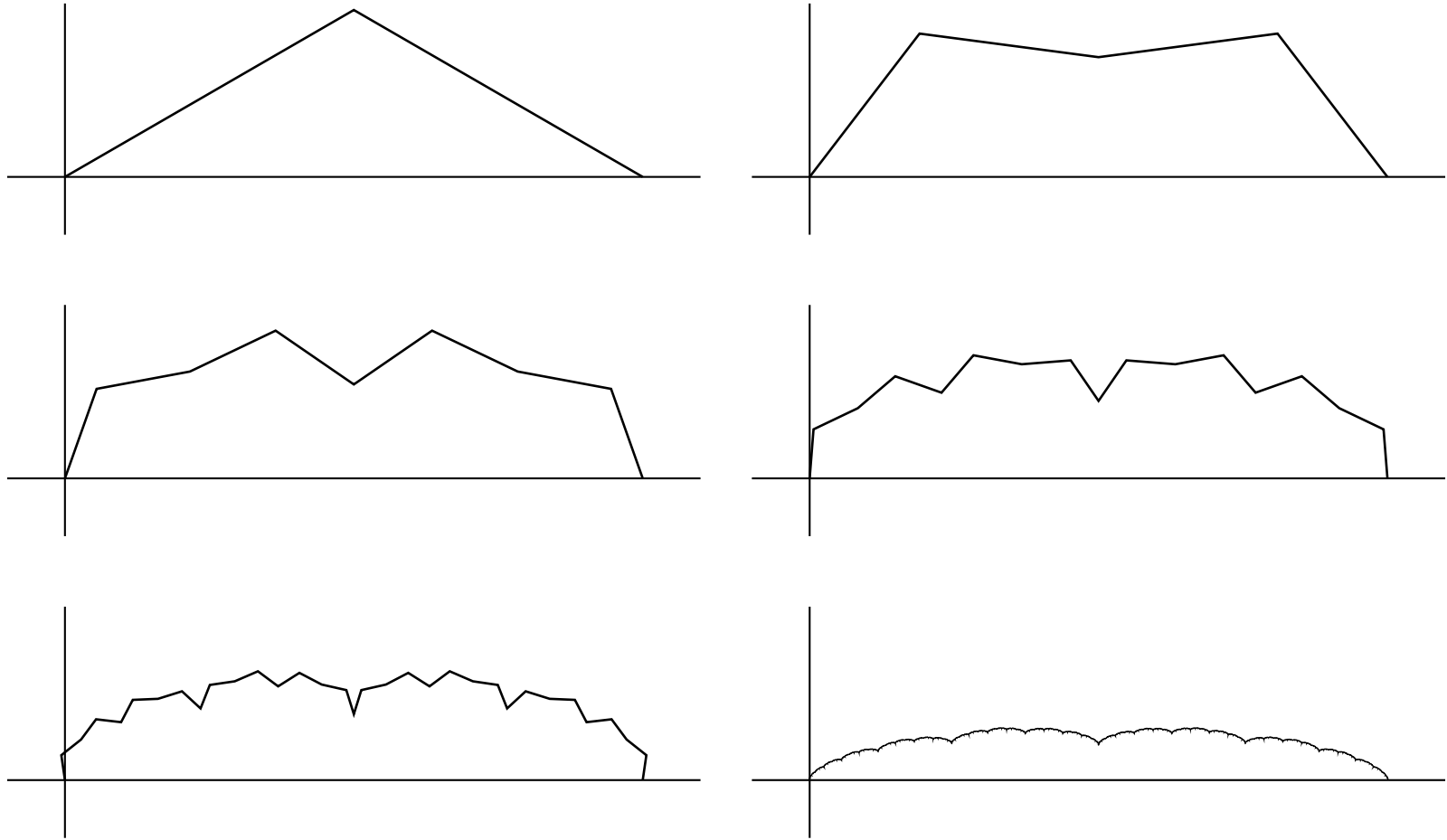

	\begin{center}
		\psset{xunit=8.5cm,yunit=8.5cm,runit=8.5cm}

	\end{center}
	\caption{	Time grid $\Gamma$ for $r = 1, 2, 3, 4, 5$ and $11$ ($g = h = 1$ and $p = k = 2$).}
	\label{fig:gammaFractal}
\end{sidewaysfigure}
\paragraph{Remark}
As mentioned above our approach leads to the same class of methods, already discovered in \cite{HO08} and \cite{V08}.
In contrast to the construction of $k$-term composition methods by solving the system of order conditions (\ref{eqn:orderConditions}),
we explicitly get these methods by using the basic (RKM) along special complex time grids. The iteration done in \cite{HO08} and \cite{V08},
to construct higher order methods based on a certain (RKM), transfers (in our geometric point of view) 
to an application of this (RKM) along the recursively defined time grid $\Gamma$.
\subsection{A complex orbit}
As a benchmark and illustration of the ideas we just introduced, we consider a classical example of celestial mechanics - the
restricted three body problem. The corresponding system of differential equations 
\begin{align} \label{eq:arenstorfSystem}
	\ddot{x}_1 & = x_1 + 2 \dot{x}_2 - \hat{\mu} \frac{x_1 + \mu}{\sqrt{\big((x_1 + \mu)^2 + x_2^2\big)^3}}
		- \mu \frac{x_1 - \hat{\mu}}{\sqrt{\big((x_1 - \hat{\mu})^2 + x_2^2\big)^3}}, \notag \\ \\
	\ddot{x}_2 & = x_2 - 2 \dot{x}_1 - \hat{\mu} \frac{x_2}{\sqrt{\big((x_1 + \mu)^2 + x_2^2\big)^3}}
		- \mu \frac{x_2}{\sqrt{\big((x_1 - \hat{\mu})^2 + x_2^2\big)^3}}, \notag
\end{align}
is motivated by the motion of a satellite with respect to the gravitation potential induced by the moon and the earth.
Thereby $\mu := 0.012277471$ is the ratio of the moon corresponding to the mass of the total system and $\hat{\mu} := 1 - \mu$.
Furthermore, the mass of the satellite could be neglected and $\big(x_1(t), x_2(t)\big)^T$ represents the vector of the satellite's coordinates 
(the motion stays in a plane) in terms of a ``rotating'' coordinate system, whose origin represents the gravitational center of the moon and the earth 
and in which both celestial bodies stay on predefined points on the $x_1$-axis. 

Due to the american mathematician \textsc{R. Arenstorf}\footnote{Compare \cite{A63}. At this point we want to mention, that \textsc{Arenstorf}
showed the existence of such orbits by analytic continuation of parameterized two-body problems, for which the exact solutions (conic sections) 
are explicitly known.}, we know, that there exists a periodic solution
to the corresponding initial value problem given by the system (\ref{eq:arenstorfSystem}) and a suitable chosen initial value.
As illustrated by Figure \ref{fig:arenstorfOrbit}, the error at the terminal point (integration from $t_0  = 0$ to $T := 17.065216560157960$, where 
$T$ is the period of the corresponding exact orbit) by the use of the explicit \textsc{Euler} method (discrete evolution denoted by $\Psi$)
is more than $55$ times larger than with the composition method $\Upsilon_1$ ($k = 2$).
Here, both approaches have been calculated by using equidistant time grids with the same number of time steps.
\begin{sidewaysfigure}[h]
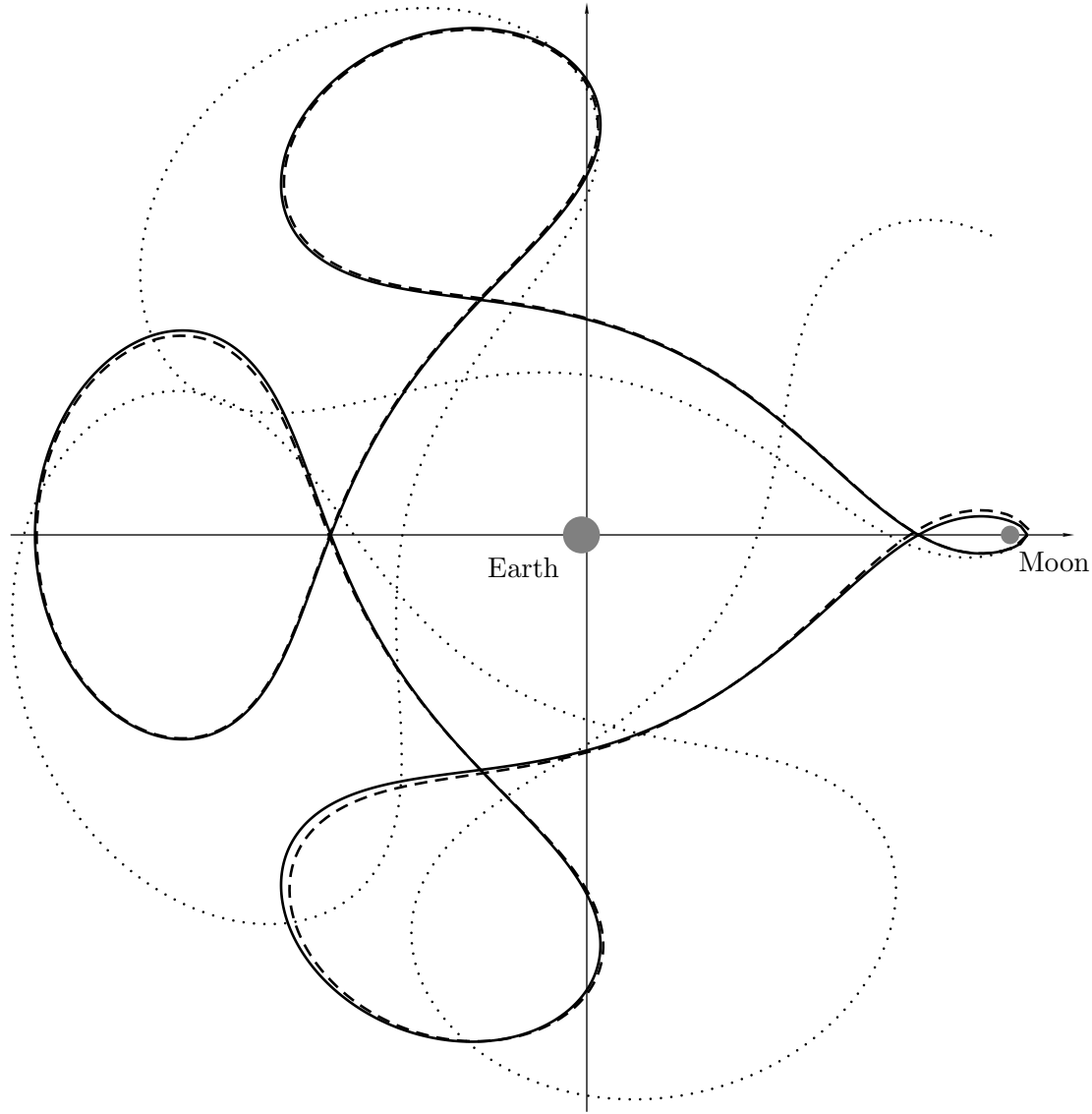

	\begin{center}
			\psset{xunit=6cm,yunit=6cm,runit=6cm}

	\end{center}
	\caption{	The ``exact'' \textsc{Arenstorf} orbit (period $T = 17.065216560157960$, corresponding initial value 
            $\big(x_1(0), x_2(0), \dot{x}_1(0), \dot{x}_2(0)\big)^T = \big(0.9940, 0.0, 0.0, -2.001585106379080\big)^T$)
						given by $100000$ real-valued equidistant steps of \textsc{Dopri5} (5-th order (eRKM), solid), 
						approximation by $100000$ equidistant real-valued steps of the explicit \textsc{Euler} method $\Psi$ (dotted) and 
						real part of the approximation by $50000$ (2 ``micro'' steps for each ``macro'' time step, this means $k = 2$) 
						time steps of the complex explicit \textsc{Euler} method (dashed), induced by $\Upsilon_1$.}
	\label{fig:arenstorfOrbit}
\end{sidewaysfigure}
\nocite{F01}
\nocite{F02}
\nocite{FIL06}
\section{Outlook}
We have seen, that composition methods with complex coefficients is nothing else than numerical integration along special complex time grids
using a basic (RKM). As mentioned in the introduction, numerical integration along complex time grids involves several problems
in general. 

First, one has to handle monodromy effects. This means, if one uses the discussed complex methods, one has to be aware of
the fact, that branch points close to the real axis can be circulated by the complex path of integration, which causes a switch
of the calculated solution's branch.  

Moreover, one has to develop a computing framework, 
that enables the integrator to mimic the concept of global analytic functions. 

That all this effort is a worthwhile endeavor can be seen as follows. On the hand, the algorithmic effort of numerically solving an initial 
value problem by taking a complex detour, can be reduced in a significant way (compare \cite{C80}). Thereby the idea is to stay away from
singularities. On the other hand, the extended viewpoint given by studying differential equations over the complex numbers enriches 
the structural understanding of the solution. For example, it might be possible and of practical interest to solve boundary value problems
over the complex numbers, in situations where a real-valued approach would be ill-posed.

\section*{Acknowledgement}
The authors wish to express their gratitude to Folkmar Bornemann, Juri Suris, Oliver Junge and Ernst Hairer for their helpful remarks.
Furthermore we thank Gilles Vilmart for his support.
\bibliography{goncti}
\bibliographystyle{siam}
\end{document}